\newtheorem{ttt}{TTT}
\newtheorem{thm}{Theorem}[section]
\newtheorem{lemma}[thm]{Lemma}
\newtheorem{cor}[thm]{Corollary}
\newtheorem{prop}[thm]{Proposition}
\newtheorem{exercise}[ttt]{Exercise}
\newtheorem{question}[ttt]{Question}
\newtheorem*{mainthm}{Main Theorem}
\newtheorem*{mainprop}{Main Proposition}
\newtheorem*{thm*}{Theorem}
\theoremstyle{remark}
\newtheorem{remark}[thm]{Remark}
\newtheorem{definition}[thm]{Definition}
\newtheorem{notation}[thm]{Notation}
\newtheorem*{warning*}{Caution}
\newtheorem{example}[thm]{Example}
\def\R{{\mathbb R}}
\def\Z{{\mathbb Z}}
\def\f{{\mathbb F}}
\def\F{{\mathbb F}}
\def\P{{\mathbb P}}
\def\N{{\mathbb N}}
\def\fg{{fg}}
\def\fp{{fp}}
\def\gad{\textsf{GAD}}
\def\GAD{\gad}
\def\clg{\textsf{CLG}}
\def\CLG{\clg}
\def\jsj{\textsf{JSJ}}
\def\JSJ{\jsj}
\def\qh{\textsf{QH}}
\def\QH{\qh}
\def\mr{\textsf{MR}}
\def\morphism{f}
\def\A{\mathcal A}
\def\PA{\mathcal{PA}}
\def\T{\mathcal{T}}
\def\G{\mathcal{G}}
\def\K{\mathcal{K}}
\def\gen{\mathcal B}
\def\llangle{\langle\!\langle}
\def\rrangle{\rangle\!\rangle}
\def\res{\phi}
\def\Cayley{T_\f}
\def\homo{f}
\def\stabker{\underrightarrow{Ker}\ }
\def\auto{\alpha}
\def\blackbox{B}
\def\limitgroup{\Gamma}
\def\pml{{\mathcal P}{\mathcal M}{\mathcal L}}
\def\peri{P}
\def\cl{\overline{P}}
\def\leaf{\ell}
\begin{document}

\title{Notes on Sela's work: Limit groups and Makanin-Razborov diagrams}
\author{Mladen Bestvina${}^*$ \and  Mark Feighn\thanks{The authors
    gratefully acknowledge support of the National Science
    Foundation. This material is based upon work supported by the National Science Foundation under Grants Nos.~DMS-0502441 and DMS-0805440.}}
\date{May 16, 2009}
%\date{May 9, 2005, minor changes July 30, 2007}
\maketitle

\begin{abstract}
This is the first in a planned series of papers giving an alternate approach
to Zlil Sela's work on the Tarski problems. The present paper is an
exposition of work of Kharlampovich-Myasnikov and Sela giving a
parametrization of $Hom(G,\F)$ where $G$ is a finitely generated group
and $\F$ is a non-abelian free group.
\end{abstract}

\setcounter{tocdepth}{1}
\tableofcontents
\section{The Main Theorem}
\subsection{Introduction}
This is the first of a planned series of papers giving an alternative
approach to Zlil Sela's work on the Tarski problems
\cite{zs:tarski,zs:tarski1,zs:tarski2,zs:tarski3,zs:tarski4,zs:tarski5.1,zs:tarski5.2,zs:tarski6,zs:tarski7,zs:tarski8}.
The present paper is an exposition of the following result of
Kharlampovich-Myasnikov \cite{km:limit1,km:limit2} and Sela
\cite{zs:tarski1}:
\begin{thm*}\label{t:zmain}
Let $G$ be a finitely generated non-free group. There is a finite
collection $\{q_i:G\to\limitgroup_i\}$ of proper epimorphisms of $G$ such
that, for any homomorphism $\homo$ from $G$ to a free group $F$, there
is $\auto\in Aut(G)$ such that $\homo\auto$ factors through some
$q_i$.
\end{thm*}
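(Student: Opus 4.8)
The plan is to argue by contradiction, combining the Bestvina--Paulin construction of limiting actions on $\R$-trees with Sela's shortening argument. Two reductions come first. Since every finitely generated subgroup of a free group is free (Nielsen--Schreier) and every finitely generated free group embeds into a fixed nonabelian free group $\F$ (say of rank $2$), it suffices to consider homomorphisms $\homo\colon G\to\F$; and since $G$ is non-free, no such $\homo$ is injective, so every $\homo$ factors through the proper quotient $G\to G/\mathrm{Ker}(\homo)$, which is a finitely generated free group, in particular a limit group. Moreover $G$ has only countably many proper quotients that are limit groups, since limit groups are finitely presented, there are only countably many such groups, and only countably many epimorphisms of $G$ onto each; enumerate them $q_1,q_2,\dots$. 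It then suffices to prove that some finite initial segment $\{q_1,\dots,q_M\}$ satisfies the conclusion of the theorem.

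Suppose not. Then for every $N$ there is $\homo_N\colon G\to\F$ such that, for all $\auto\in\mathrm{Aut}(G)$, $\homo_N\auto$ does not factor through any $q_i$ with $i\le N$. This ``badness'' depends only on the orbit of $\homo_N$ under precomposition by $\mathrm{Aut}(G)$ and conjugation in $\F$, so we may take $\homo_N$ of minimal length: $\max_{s\in S}|\homo_N(s)|\le\max_{s\in S}|c\,\homo_N(\auto(s))\,c^{-1}|$ for all $\auto$ and $c$, where $S$ is a fixed finite generating set of $G$ (a minimum exists since lengths are non-negative integers). If $\max_s|\homo_N(s)|$ stayed bounded along a subsequence, then some fixed homomorphism $\homo$ would equal $\homo_N$ for infinitely many $N$; but $\homo$ factors through $q_{i_0}:=(G\to G/\mathrm{Ker}(\homo))$, contradicting badness as soon as $N\ge i_0$. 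Hence $\max_s|\homo_N(s)|\to\infty$.

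Passing to a subsequence, the Bestvina--Paulin construction --- rescale the Cayley graph of $\F$ by $\max_s|\homo_N(s)|$ and pass to a Gromov limit --- yields a nontrivial minimal action of $\limitgroup:=G/\stabker$ on an $\R$-tree $T$ with trivial arc stabilizers (trivial because $\F$ is free), where $\stabker$ is the stable kernel; and $\limitgroup$ is a limit group. Two cases. If $\stabker\neq1$, then $G\to\limitgroup$ is a proper limit quotient, hence equals some $q_{i_0}$; since $\limitgroup$ is finitely presented, $\stabker$ is the normal closure of finitely many elements, so $\stabker\subseteq\mathrm{Ker}(\homo_N)$ for all large $N$, whence $\homo_N$ factors through $q_{i_0}$ for all large $N\ge i_0$ --- contradicting badness. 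If $\stabker=1$, then $G=\limitgroup$ acts nontrivially on $T$ and the shortening argument applies: the Rips machine produces a generalized abelian decomposition (\GAD) of $G$ --- with abelian edge groups, quadratically hanging surface subgroups, and abelian (non-cyclic) vertex subgroups --- and the associated modular automorphisms (Dehn twists along edges, mapping-class automorphisms of the surface subgroups fixing their boundaries, automorphisms of the abelian subgroups fixing their peripheral structure) can be combined, for each large $N$, into $\auto_N\in\mathrm{Aut}(G)$ with $\max_s|\homo_N(\auto_N(s))|<\max_s|\homo_N(s)|$ --- contradicting minimality of $\homo_N$. Either way there is a contradiction.

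I expect the crux to be the case $\stabker=1$, i.e.\ the shortening argument: extracting the \GAD\ from the $\R$-tree $T$ via the Rips machine (including ruling out ``thin'' Levitt-type components, where triviality of arc stabilizers is used) and then proving that modular automorphisms actually shorten $\homo_N$ for all large $N$ is the technical heart of the theory. A subsidiary point not to overlook is finite presentability of limit groups, which is precisely what makes ``$\stabker\subseteq\mathrm{Ker}(\homo_N)$ for all large $N$'' legitimate; this should be established beforehand. (This theorem is the single step underlying Makanin--Razborov diagrams; iterating it and proving termination is a separate matter, not needed here.)
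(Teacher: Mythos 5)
Your overall plan---pass to a Bestvina--Paulin limit tree of length-minimal ``bad'' homomorphisms and then invoke the shortening argument---is indeed the engine that drives the paper's proof of the Main Proposition. But as written the proposal has several genuine gaps. Most importantly, there is no reduction to the \emph{generic} case. The structure theorem for the limit tree (and hence the \GAD\ and the shortening argument built on it) requires the acting group to be freely indecomposable, torsion-free, non-abelian, and not a closed surface group. The paper proves the Main Theorem from the Main Proposition precisely by an auxiliary induction on the complexity $(\mu(G),\ Hom(G,\F))$, handling free products, abelian groups, and surface groups separately \emph{before} the limit-tree machinery enters. Without this reduction your argument breaks: if $G$ is abelian, every $\homo_N$ has cyclic image and the Bestvina--Paulin limit is a line, not an irreducible tree, so the Rips machine has nothing to say; if $G$ is freely decomposable the Rips machine can produce thin pieces. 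Relatedly, your claim that the limit tree has ``trivial arc stabilizers (trivial because $\F$ is free)'' is false. The correct statement (Proposition 3.17 in the paper) is that the action is very small and super stable, so arc stabilizers are primitive \emph{abelian}, possibly of high rank. Since you use this false claim to rule out thin (Levitt) components, that part of the argument collapses; the paper rules out thin pieces via free indecomposability, not arc stabilizers.

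The second concern is the reliance on finite presentability of limit groups, both to enumerate the proper limit quotients of $G$ and to argue that the stable kernel is normally finitely generated, hence killed by $\homo_N$ for all large $N$. Finite presentability of limit groups is a deep theorem established in the paper only \emph{after} the Main Theorem (it is part of the CLG characterization, Theorem~\ref{t:tfae}), so invoking it upfront is at best a large debt and at worst circular. The paper avoids it entirely by a compactness argument: the closure $\T'(G)$ of the trees $T_\homo$ over \emph{short} homomorphisms is compact (Corollary~\ref{c:basic}), the sets $U(k)=\{T : k\in Ker(T)\}$ are clopen (Proposition~\ref{p:basic}), and the shortening argument shows every $T\in\T'(G)$ has $Ker(T)\neq 1$; one then extracts a finite subcover $\{U(k_i)\}$ to produce the factor set $\{G\to G/\llangle k_i\rrangle\}$, with no enumeration and no appeal to finite presentation. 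Replacing your enumeration-plus-FP step with this covering argument, and inserting the reduction to generic $G$ via the complexity induction, would bring the proposal in line with a correct proof.
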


A more refined statement is given in the Main Theorem on
page~\pageref{t:main}.  Our approach, though similar to Sela's,
differs in several aspects: notably a different measure of complexity
and a more geometric proof which avoids the use of the full Rips
theory for finitely generated groups acting on $\R$-trees; see
Section~\ref{s:more geometric}. We attempted to include enough
background material to make the paper self-contained. See Paulin
\cite{fp:asterisque} and Champetier-Guirardel \cite{cg:limit} for
accounts of some of Sela's work on the Tarski problems.

The first version of these notes was circulated in 2003. In the
meantime Henry Wilton \cite{henry} made available solutions to the
exercises in the notes. We also thank Wilton for making numerous
comments that led to many improvements.

\begin{remark}
In the theorem above, since $G$ is finitely generated we may assume
that $F$ is also finitely generated. If $F$ is abelian, then any $f$
factors through the abelianization of $G$ mod its torsion subgroup and
we are in the situation of Example~\ref{e:abelian} below. Finally, if
$F_1$ and $F_2$ are finitely generated non-abelian free groups then
there is an injection $F_1\to F_2$. So, if $\{q_i\}$ is a set of
epimorphisms that satisfies the conclusion of the theorem for maps to
$F_2$, then $\{q_i\}$ also works for maps to $F_1$. Therefore,
throughout the paper we work with a fixed finitely generated
non-abelian free group $\f$.
\end{remark}

\begin{notation}
Finitely generated (finitely presented) is abbreviated \fg\
  (respectively \fp).
\end{notation}

The main goal of \cite{zs:tarski1} is to give an answer to the following:

\begin{question}
Let $G$ be an \fg\ group.  Describe the set of all homomorphisms from
$G$ to $\f$.
\end{question}

\begin{example}
When $G$ is a free group, we can identify $Hom(G,\f)$ with the
cartesian product $\f^n$ where $n=rank(G)$.
\end{example}

\begin{example}\label{e:abelian}
If $G=\Z^n$, let $\mu:\Z^n\to\Z$ be the projection to one of the
coordinates. If $h:\Z^n\to \f$ is a homomorphism, there is an
automorphism $\alpha:\Z^n\to\Z^n$ such that $h\alpha$ factors through
$\mu$. This provides an explicit (although not 1-1) parametrization of
$Hom(G,\F)$ by $Aut(\Z^n)\times Hom(\Z,\F)\cong GL_n(\Z)\times \F$.
\end{example}

\begin{example}\label{e:closed surface}
When $G$ is the fundamental group of a closed genus $g$ orientable
surface, let $\mu:G\to F_g$ denote the homomorphism to a free group of
rank $g$ induced by the (obvious) retraction of the surface to the
rank $g$ graph. It is a folk theorem\footnote{see Zieschang
\cite{hz:surfaces} and Stallings \cite{js:surface}} that for every
homomorphism $\homo:G\to \f$ there is an automorphism $\alpha:G\to G$
(induced by a homeomorphism of the surface) so that $\homo\alpha$
factors through $\mu$.  The theorem was generalized to the case when
$G$ is the fundamental group of a non-orientable closed surface by
Grigorchuk and Kurchanov \cite{gk:surface}. Interestingly, in this
generality the single map $\mu$ is replaced by a finite collection
$\{\mu_1,\cdots,\mu_k\}$ of maps from $G$ to a free group $F$. In
other words, for all $\homo\in Hom(G,\f)$ there is $\auto\in Aut(G)$
induced by a homeomorphism of the surface such that $\homo\auto$
factors through some $\mu_i$.
\end{example}

\subsection{Basic properties of limit groups}
Another goal is to understand the class of groups that naturally
appear in the answer to the above question, these are called limit
groups.

\begin{definition}
Let $G$ be an \fg\ group. A sequence $\{\homo_i\}$ in $Hom(G,\f)$ is
{\it stable} if, for all $g\in G$, the sequence $\{\homo_i(g)\}$ is
eventually always 1 or eventually never 1. The {\it stable kernel} of $\{\homo_i\}$,
denoted $\stabker\homo_i,$ is
$$\{g\in G\mid \homo_i(g)=1\mbox{ for almost all }i\}.$$ An \fg\ group
$\limitgroup$ is a {\it limit group} if there is an \fg\ group $G$ and
a stable sequence $\{\homo_i\}$ in $Hom(G,\f)$ so
that $\limitgroup\cong G/\stabker\homo_i.$
\end{definition}

\begin{remark}
One can view each $\homo_i$ as inducing an action of $G$ on the Cayley
graph of $\f$, and then can pass to a limiting $\R$-tree action (after
a subsequence). If the limiting tree is not a line, then
$\stabker\homo_i$ is precisely the kernel of this action and so
$\limitgroup$ acts faithfully. This explains the name.
\end{remark}

\begin{definition}
An \fg\ group $\limitgroup$ is {\it residually free} if for every
element $\gamma\in\limitgroup$ there is $\homo\in Hom(\limitgroup,\f)$ such
that $\homo(\gamma)\not= 1$. It is {\it $\omega$-residually free} if for
every finite subset $X\subset\limitgroup$ there is $\homo\in
Hom(\limitgroup,\f)$ such that $\homo|X$ is injective.
\end{definition}

\begin{exercise}\label{e:torsion free}
Residually free groups are torsion free.
\end{exercise}

\begin{exercise}
Free groups and free abelian groups are $\omega$-residually free.
\end{exercise}

\begin{exercise}
The fundamental group of $n\P^2$
for $n=1$, 2, or 3 is not $\omega$-residually free, see \cite{rl:3p}.
\end{exercise}

\begin{exercise}
Every $\omega$-residually free group is a limit group.
\end{exercise}

\begin{exercise}\label{e:subgroup}
An \fg\ subgroup of an $\omega$-residually free group is
$\omega$-residually free. 
\end{exercise}

\begin{exercise}\label{e:max abelian}
Every non-trivial abelian subgroup of an $\omega$-residually free group is
contained in a unique maximal abelian subgroup. For example, $F\times
\Z$ is not $\omega$-residually free for any non-abelian $F$. 
\end{exercise}

\begin{lemma}\label{l:homs}
Let $G_1\to G_2\to\cdots$ be an infinite sequence of epimorphisms
between \fg\ groups. Then the sequence $$Hom(G_1,\F)\leftarrow
Hom(G_2,\F)\leftarrow\cdots$$ eventually stabilizes (consists of
bijections).
\end{lemma}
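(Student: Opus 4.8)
\emph{Proof proposal.} The plan is to split the statement into an easy injectivity part and a harder ``eventual surjectivity'' part, and to handle the latter by linearizing $\f$ and appealing to Noetherianity of the Zariski topology.

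First, injectivity. If $p\colon G\twoheadrightarrow H$ is an epimorphism, then $p^*\colon Hom(H,\F)\to Hom(G,\F)$, $\psi\mapsto\psi\circ p$, is injective (two homomorphisms out of $H$ that agree after restriction along a surjection coincide). Applying this to the composite epimorphisms $\pi_i\colon G_1\twoheadrightarrow G_i$ shows that every map in the sequence $Hom(G_1,\F)\leftarrow Hom(G_2,\F)\leftarrow\cdots$ is injective. Let $N_i$ denote the kernel of $\pi_i$, and identify $Hom(G_i,\F)$ with its image $\{\phi\in Hom(G_1,\F): \phi|_{N_i}=1\}\subseteq Hom(G_1,\F)$. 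Since $N_1\subseteq N_2\subseteq\cdots$, these images form a descending chain of subsets of $Hom(G_1,\F)$, and it remains to show the chain is eventually constant; a routine diagram chase (commutativity of the triangles together with injectivity of the $\pi_i^*$) then upgrades this to the assertion that the maps $Hom(G_{i+1},\F)\to Hom(G_i,\F)$ are bijections for all large $i$.

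To control the chain, fix a finite generating set $a_1,\dots,a_n$ of $G_1$. This realizes $Hom(G_1,\F)$ as a subset of $\F^n$, and $Hom(G_i,\F)$ as the set of tuples $(g_1,\dots,g_n)\in\F^n$ with $w(g_1,\dots,g_n)=1$ for every relator $w$ of $G_i$ (the relator sets increasing with $i$). Since $\f$ is non-abelian free it embeds in $SL_2(\Z)\subseteq M_2(\mathbb{C})$, so $\F^n$ embeds in $M_2(\mathbb{C})^n\cong\mathbb{C}^{4n}$. Under this embedding each relator equation $w(g_1,\dots,g_n)=1$ becomes four polynomial equations with integer coefficients in the $4n$ matrix-entry coordinates (using $\det=1$ to express matrix inverses polynomially); hence the locus $V_i\subseteq\mathbb{C}^{4n}$ of tuples satisfying all relators of $G_i$ is Zariski-closed, $V_1\supseteq V_2\supseteq\cdots$, and $Hom(G_i,\F)=V_i\cap(\text{the fixed image of }\F^n)$.

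Finally, by the Hilbert basis theorem $\mathbb{C}^{4n}$ is a Noetherian topological space, so the descending chain $V_1\supseteq V_2\supseteq\cdots$ stabilizes: $V_i=V_N$ for all $i\ge N$. Intersecting with the copy of $\F^n$ gives $Hom(G_i,\F)=Hom(G_N,\F)$ for $i\ge N$, which finishes the argument. The only non-formal ingredient is the linearity of free groups---equivalently, that a descending chain of systems of group-word equations in $\F^n$ stabilizes, i.e.\ that free groups are equationally Noetherian (Guba). I expect the one point needing a little care to be the bookkeeping that passes from ``the chain of images in $Hom(G_1,\F)$ is eventually constant'' to ``the connecting maps are eventually bijective'', together with noting that allowing infinitely many relators causes no trouble: an arbitrary intersection of Zariski-closed sets is closed, and the descending chain condition holds for all closed sets regardless of how many equations cut them out.
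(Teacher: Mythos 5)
Your proof is correct and takes essentially the same approach as the paper's: the paper embeds $\F$ into $SL_2(\R)$ and invokes Noetherianity of the descending chain of representation varieties $Hom(G_i,SL_2(\R))$, exactly the linearization-plus-Hilbert-basis argument you spell out (your use of $SL_2(\Z)\subset M_2(\mathbb{C})$ rather than $SL_2(\R)$ is immaterial). You simply make explicit the injectivity of the pullback maps and the bookkeeping that converts ``the chain of images stabilizes'' into ``the connecting maps are eventually bijective,'' both of which the paper leaves to the reader.
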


\begin{proof}
Embed $\f$ as a subgroup of $SL_2(\R)$. That the corresponding sequence
of varieties $Hom(G_i,SL_2(\R))$ stabilizes follows from algebraic
geometry, and this proves the lemma.
\end{proof}

\begin{cor}\label{c:homs}
A sequence of epimorphisms between $(\omega-)$residually free groups
eventually stabilizes.\qed
\end{cor}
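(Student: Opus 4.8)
The plan is to deduce this directly from Lemma~\ref{l:homs} together with the defining property of residual freeness. First I would note that every $\omega$-residually free group is in particular residually free, so it is enough to handle a sequence $\limitgroup_1\to\limitgroup_2\to\cdots$ of epimorphisms between residually free groups (which are \fg\ by our convention, so that Lemma~\ref{l:homs} does apply). Each epimorphism $\epi\colon\limitgroup_i\to\limitgroup_{i+1}$ induces, by precomposition, a map $\epi^*\colon Hom(\limitgroup_{i+1},\F)\to Hom(\limitgroup_i,\F)$, and surjectivity of $\epi$ makes $\epi^*$ injective; thus the chain $Hom(\limitgroup_1,\F)\leftarrow Hom(\limitgroup_2,\F)\leftarrow\cdots$ is exactly of the type appearing in Lemma~\ref{l:homs}, hence eventually consists of bijections, say $\epi^*$ is onto for all $i\ge N$.

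It then remains to check that an epimorphism $\epi\colon\limitgroup_i\to\limitgroup_{i+1}$ of residually free groups for which $\epi^*$ is onto is automatically injective, hence an isomorphism. Given $g\in\ker\epi$ with $g\ne 1$, residual freeness of $\limitgroup_i$ supplies $\homo\in Hom(\limitgroup_i,\F)$ with $\homo(g)\ne 1$; writing $\homo=\psi\epi$ for some $\psi\in Hom(\limitgroup_{i+1},\F)$ (possible since $\epi^*$ is onto) yields $\homo(g)=\psi(\epi(g))=\psi(1)=1$, a contradiction. So $\ker\epi=1$ and $\epi$ is an isomorphism for all $i\ge N$, which is the assertion.

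I do not expect a genuine obstacle here: the argument is a short diagram chase. The two points to keep straight are the contravariance of $Hom(-,\F)$, so that the given ascending chain of epimorphisms produces the descending chain of sets that Lemma~\ref{l:homs} stabilizes, and the observation that residual freeness is precisely the hypothesis that lets one detect a nontrivial element of $\ker\epi$ by a homomorphism to $\F$. (In fact only eventual surjectivity of the maps $\epi^*$ is needed, which is weaker than the bijectivity that Lemma~\ref{l:homs} provides.)
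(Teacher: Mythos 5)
Your argument is correct and is precisely the intended one: the paper leaves the corollary with a \qed and gives the same reasoning explicitly a few lines later (in the proof of Corollary~\ref{c:main}), namely that a proper epimorphism between residually free groups induces a proper inclusion of $Hom(-,\F)$ sets, so Lemma~\ref{l:homs} forces eventual stabilization. No gaps; same approach as the paper.
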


\begin{lemma}\label{l:limit group is orf}
Every limit group is $\omega$-residually free.
\end{lemma}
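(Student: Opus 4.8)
The plan is to show that, after discarding an initial segment of the defining sequence $\{f_i\}$, every $f_i$ already factors through $\limitgroup$; once this is established, $\omega$-residual freeness is almost immediate. So suppose $\limitgroup = G/\stabker f_i$ and set $N_i = \ker(f_i\colon G\to\f)$ and $K_j = \bigcap_{i\ge j}N_i$, a normal subgroup of $G$. The $K_j$ increase with $j$, and $\stabker f_i = \bigcup_j K_j$, so $\limitgroup$ is the direct limit of the tower of epimorphisms $G/K_1 \twoheadrightarrow G/K_2 \twoheadrightarrow \cdots$. Each $G/K_j$ is finitely generated (a quotient of $G$) and the homomorphism $G/K_j \to \prod_{i\ge j}\f$ given by $gK_j\mapsto(f_i(g))_{i\ge j}$ is injective; since a nontrivial element of a product of copies of $\f$ is nonzero in some coordinate and projection to that coordinate is a homomorphism to $\f$ detecting it, each $G/K_j$ is residually free.

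Next I would invoke Corollary~\ref{c:homs}: the tower $G/K_1 \twoheadrightarrow G/K_2 \twoheadrightarrow\cdots$ of epimorphisms between residually free groups eventually consists of isomorphisms. Hence $K_j = K_{j_0} =: K$ for all $j\ge j_0$ and some $j_0$, so $\limitgroup\cong G/K$; moreover $K = \bigcap_{i\ge j_0}N_i\subseteq N_i$ for every $i\ge j_0$, so each such $f_i$ descends to a homomorphism $\bar f_i\colon\limitgroup\to\f$.

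To finish, let $X\subset\limitgroup$ be finite. Replacing $X$ by the finite set $\{xy^{-1}\mid x\ne y\text{ in }X\}$ of nontrivial elements, it suffices to find one homomorphism $\limitgroup\to\f$ that is nontrivial on each of finitely many prescribed $\gamma_1,\dots,\gamma_n\in\limitgroup\setminus\{1\}$. Lift $\gamma_l$ to $g_l\in G$; then $g_l\notin\stabker f_i$, and by stability of $\{f_i\}$ the sequence $\{f_i(g_l)\}$ is eventually never trivial. Choose $i\ge j_0$ with $f_i(g_l)\ne 1$ for all $l$ simultaneously (a conjunction of finitely many eventual conditions); then $\bar f_i(\gamma_l)=f_i(g_l)\ne 1$ for every $l$, and $\bar f_i$ is the desired homomorphism.

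The only non-formal step is the appeal to Corollary~\ref{c:homs}, which rests on the Noetherian property of representation varieties in Lemma~\ref{l:homs}: this is precisely what bounds the tower $\{G/K_j\}$ and forces the $f_i$ to factor through $\limitgroup$. I expect that to be the heart of the matter; everything else is elementary manipulation of the increasing union $\stabker f_i=\bigcup_j K_j$ together with the observation that subgroups of products of copies of $\f$ are residually free. (Note that stability of $\{f_i\}$ enters only in the last step, to turn ``$g_l\notin\stabker f_i$'' into ``$f_i(g_l)\ne 1$ for all large $i$''.)
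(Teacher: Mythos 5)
Your proof is correct and follows the same strategy as the paper's: use the Noetherian stabilization of Lemma~\ref{l:homs} to conclude that a cofinal tail of the $f_i$ actually descends to $\limitgroup$, and then invoke stability of $\{f_i\}$ to produce, for any given finite subset, one such descended homomorphism injective on it. The only real difference is in the construction of the approximating tower: the paper first reduces to $G$ finitely presented and then adjoins one relator of $\stabker f_i$ at a time, applying Lemma~\ref{l:homs} to quotients that need not be residually free; you instead use the nested kernels $K_j=\bigcap_{i\ge j}\ker f_i$, whose quotients $G/K_j$ are manifestly residually free, and apply Corollary~\ref{c:homs} to see that the tower stabilizes outright, giving $\limitgroup\cong G/K_{j_0}$ directly and rendering the reduction to the finitely presented case unnecessary. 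Both routes rest on the same key lemma and reach the same endpoint; yours is a clean minor variant, not a genuinely different argument.
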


\begin{proof}
Let $\limitgroup$ be a limit group, and let $G$ and $\{\homo_i\}$ be
as in the definition.  Without loss, $G$ is \fp. Now consider the
sequence of quotients
$$G\to G_1\to G_2\to\cdots\to \limitgroup$$ obtained by adjoining one
relation at a time. If $\Gamma$ is \fp\ the sequence terminates, and
in general it is infinite. Let $G'=G_j$ be such that
$Hom(G',\f)=Hom(\limitgroup,\f)$. All but finitely many $\homo_i$
factor through $G'$ since each added relation is sent to 1 by almost
all $\homo_i$. It follows that these $\homo_i$ factor through $\Gamma$
and each non-trivial element of $\Gamma$ is sent to 1 by only finitely
many $\homo_i$. By definition, $\limitgroup$ is $\omega$-residually
free.
\end{proof}

The next two exercises will not be used in this paper but are included
for their independent interest.

\begin{exercise}\label{embedding}
Every $\omega$-residually free group $\limitgroup$ embeds into $PSL_2(\R)$, and
also into $SO(3)$.
\end{exercise}

\begin{exercise}\label{embedding2}
Let $\limitgroup$ be $\omega$-residually free.  For any finite
collection of nontrivial elements $g_1,\cdots,g_k\in\limitgroup$ there
is an embedding $\limitgroup\to PSL_2(\R)$ whose image has no
parabolic elements and so that $g_1,\cdots,g_k$ go to hyperbolic
elements.
\end{exercise}

\subsection{Modular groups and the statement of the main theorem}
Only certain automorphisms, called {\it modular automorphisms}, are
needed in the theorem on page~\pageref{t:zmain}. This section contains
a definition of these automorphisms.

\begin{definition}
Free products with amalgamations and $HNN$-decompositions of a group
$G$ give rise to {\it Dehn twist automorphisms of $G$.} Specifically,
if $G=A*_C B$ and if $z$ is in the centralizer $Z_B(C)$ of $C$ in $B$,
then the automorphism $\auto_z$ of $G$, called the {\it Dehn twist in
$z$}, is determined as follows.
\[\auto_z(g)=
\begin{cases}
g, &\text{if $g\in A$;}\\
zgz^{-1}, &\text{if $g\in B.$}
\end{cases}\]
If $C\subset A$, $\phi:C\to A$ is a monomorphism, $G=A*_C=\langle
A,t\mid tat^{-1}=\phi(a), a\in A\rangle$,\footnote{$t$ is called a
  {\it stable letter.}} and $z\in Z_A(C)$, then
$\auto_z$ is determined as follows.
\[\auto_z(g)=
\begin{cases}
g, &\text{if $g\in A$;}\\
gz, &\text{if $g=t$.}
\end{cases}\]
\end{definition}

\begin{definition}
A \gad\footnote{\textsf{G}eneralized \textsf{A}belian
\textsf{D}ecomposition} of a group $G$ is a finite graph of groups
decomposition\footnote{We will use the terms {\it graph of groups
decomposition} and {\it splitting} interchangeably. Without further
notice, splittings are always {\it minimal},
i.e.\ the associated $G$-tree has no proper invariant subtrees.} 
of
$G$ with abelian edge groups in which some of the vertices are
designated \qh\footnote{\textsf{Q}uadratically \textsf{H}anging} and
some others are designated {\it abelian}, and the following holds.
\begin{itemize}
\item
A $\QH$-vertex group is the fundamental group of a compact surface $S$ with
boundary and the boundary components correspond to the incident edge
groups (they are all infinite cyclic). Further, $S$ carries a
pseudoAnosov homeomorphism (so $S$ is a torus with 1 boundary
component or $\chi(S)\le -2$).
\item
An abelian vertex group $A$ is non-cyclic abelian. 
Denote by $\peri(A)$ the subgroup of $A$ generated by incident edge
groups. The {\it peripheral subgroup of $A$}, denoted $\cl(A)$, is the
subgroup of $A$ that dies under every homomorphism from $A$ to $\Z$
that kills $\peri(A)$, i.e.\
$$\cl(A)=\cap\{Ker(\homo)\mid \homo\in Hom(A,\Z), \peri(A)\subset
Ker(\homo)\}.$$
\end{itemize}
The non-abelian  non-\qh\ vertices are {\it rigid}.
\end{definition}

\begin{remark}
We allow the possibility that edge and vertex groups of \gad 's are not \fg.
\end{remark}

\begin{remark}\label{r:retraction}
If $\Delta$ is a \gad\ for a \fg\ group $G$, and if $A$ is an abelian
vertex group of $\Delta$, then there are epimorphisms $G\to
A/\peri(A)\to A/\cl(A)$. Hence, $A/\peri(A)$ and $A/\cl(A)$ are
\fg. Since $A/\cl(A)$ is also torsion free, $A/\cl(A)$ is free, and so
$A=A_0\oplus\cl(A)$ with $A_0\cong A/\cl(A)$ a retract of
$G$. Similarly, $A/\cl(A)$ is a direct summand of $A/\peri(A)$. A
summand complementary to $A/\cl(A)$ in $A/\peri(A)$ must be a torsion
group by the definition of $\cl(A)$. In particular, $\peri(A)$ has
finite index in $\cl(A)$. It also follows from the definition of
$\cl(A)$ that any automorphism leaving $\peri(A)$ invariant must leave
$\cl(A)$ invariant as well. It follows that if $A$ is torsion free,
then any automorphism of $A$ that is the identity when restricted to
$\peri(A)$ is also the identity when restricted to
$\cl(A)$.
\end{remark}

\begin{definition}\label{d:mod}
The {\it modular group} $Mod(\Delta)$ associated to a \gad\ $\Delta$
of $G$ is the subgroup of $Aut(G)$ generated by
\begin{itemize}
\item
inner automorphisms of $G$,
\item 
Dehn twists in
elements of $G$ that centralize an edge group of $\Delta$,
\item 
unimodular\footnote{The induced automorphism of $A/\cl(A)$ has
determinant 1.} automorphisms of an abelian vertex group that are
the identity on its peripheral subgroup and all other vertex groups, and
\item 
automorphisms induced by homeomorphisms of surfaces $S$ underlying
\QH-vertices that fix all boundary components. If $S$ is closed and
orientable, we require the homeomorphisms to be
orientation-preserving\footnote{We will want our homeomorphisms to be
products of Dehn twists.}.
\end{itemize}
The {\it modular group of $G$},
denoted $Mod(G)$, is the subgroup of $Aut(G)$ generated by
$Mod(\Delta)$ for all \gad's $\Delta$ of $G$.
At times it will be convenient to view $Mod(G)$ as a subgroup of
$Out(G)$. In particular, we will say that an element of $Mod(G)$ is
{\it trivial} if it is an inner automorphism.
\end{definition}

\begin{definition}\label{d:gdt}
A {\it generalized Dehn twist} is a Dehn twist or an automorphism
$\auto$ of $G=A*_C B$ or $G=A*_C$ where in each case $A$ is abelian,
$\auto$ restricted to $\cl(A)$ and $B$ is the identity, and $\auto$
induces a unimodular automorphism of $A/\cl(A)$. Here $\cl(A)$ is the
peripheral subgroup of $A$ when we view $A*_C B$ or $G=A$ as a \gad\
with one or zero edges and abelian vertex $A$. If $C$
is an edge groups of a \gad\ for $G$ and if $z\in Z_G(C)$, then $C$
determines a splitting of $G$ as above and so also a Dehn twist in
$z$. Similarly, an abelian vertex $A$ of a \gad\
determines\footnote{by folding together the edges incident to $A$} a
splitting $A*_C B$ and so also generalized Dehn
twists.
\end{definition}

\begin{exercise}\label{e:generators}
$Mod(G)$ is generated by inner automorphisms together with generalized
Dehn twists.
\end{exercise}

\begin{definition}
A {\it factor set} for a group $G$ is a finite collection of proper
epimorphisms $\{q_i:G\to G_i\}$ such that if $\homo\in Hom(G,\f)$ then
there is $\auto\in Mod(G)$ such that $\homo\auto$ factors through some
$q_i$.
\end{definition}

\begin{mainthm}[\cite{km:limit1,km:limit2,zs:tarski}]\label{t:main}
Let $G$ be an \fg\ group that is not free. Then, $G$ has a factor set
$\{q_i:G\to \limitgroup_i\}$ with each $\limitgroup_i$ a limit
group. If $G$ is not a limit group, we can always take $\alpha$ to be
the identity.
\end{mainthm}

We will give two proofs--one in Section~\ref{s:proof} and the second,
which uses less in the way of technical machinery, in
Section~\ref{s:more geometric}. In the remainder of this section, we
explore some consequences of the Main Theorem and then give another
description of limit groups.

\subsection{Makanin-Razborov diagrams}
\begin{cor}\label{c:main}
Iterating the construction of the Main Theorem (for $\limitgroup_i$'s etc.)
yields a finite tree of groups terminating in groups that are free.
\end{cor}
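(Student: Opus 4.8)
The plan is to organize the iteration into a rooted tree, observe that it branches finitely, and then rule out infinite branches using Corollary~\ref{c:homs}. Form the rooted tree $\mathcal{T}$ whose root is the \fg\ non-free group $G$ of the Main Theorem; if a node is a non-free \fg\ group $H$, declare its children to be the groups $H_j$ occurring in a factor set $\{q_j\colon H\to H_j\}$ supplied by the Main Theorem, and if a node is free declare it a leaf. By the Main Theorem each factor set is finite, so every node has finitely many children and $\mathcal{T}$ is locally finite. Moreover, by the Main Theorem every node other than possibly the root is a limit group, hence $\omega$-residually free by Lemma~\ref{l:limit group is orf}, and in particular residually free.

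Suppose for contradiction that $\mathcal{T}$ is infinite. Since $\mathcal{T}$ is locally finite, K\"onig's Lemma yields an infinite branch, that is, an infinite sequence of proper epimorphisms
$$G=\limitgroup_0\to\limitgroup_1\to\limitgroup_2\to\cdots$$
in which $\limitgroup_{i+1}$ is a child of $\limitgroup_i$. The tail $\limitgroup_1\to\limitgroup_2\to\cdots$ is an infinite sequence of epimorphisms between residually free groups, so by Corollary~\ref{c:homs} it eventually stabilizes; in particular all but finitely many of the maps $\limitgroup_i\to\limitgroup_{i+1}$ are isomorphisms. This contradicts the fact that the maps in a factor set are proper epimorphisms. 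Hence $\mathcal{T}$ is finite, and since the construction assigns children to every non-free node, the leaves of $\mathcal{T}$ are precisely its free groups, which is the asserted finite tree terminating in free groups.

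The substance of the argument is entirely in this reduction: once one knows (from the Main Theorem together with Lemma~\ref{l:limit group is orf}) that every group produced after the first stage is residually free, Corollary~\ref{c:homs} does all the work, and there is no real obstacle beyond the bookkeeping of assembling the tree.
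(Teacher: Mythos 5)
Your proof is correct and takes essentially the same approach as the paper. The paper's proof is terser---it simply notes that a proper epimorphism between limit groups gives a strict inclusion of $Hom(\cdot,\F)$ sets (since limit groups are residually free) and then cites Lemma~\ref{l:homs}---while you make the K\"onig's Lemma bookkeeping explicit and route through Corollary~\ref{c:homs}, but the substance is identical.
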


\begin{proof}
If $\limitgroup\to \limitgroup'$ is a proper epimorphism between limit
groups, then since limit groups are residually free,
$Hom(\limitgroup',\f)\subsetneq Hom(\limitgroup,\f)$. We are done by
Lemma~\ref{l:homs}.
\end{proof}

\begin{definition}\label{d:mr}
The tree of groups and epimorphisms provided by Corollary~\ref{c:main}
is called an {\it \mr-diagram}\footnote{for
\textsf{M}akanin-\textsf{R}azborov, cf.\
\cite{gm:equations,gm:equations2, ar:equations}.} for $G$ (with
respect to $\F$). If
$$G\overset{q}{\to}\limitgroup_1\overset{q_1}{\to}\limitgroup_2\overset{q_2}{\to}\cdots\overset{q_{m-1}}{\to}\limitgroup_m$$
is a branch of an \mr-diagram and if $\homo\in Hom(G,\f)$ then we say
that $\homo$ {\it \mr-factors} through this branch if there are
$\auto\in Mod(G)$ (which is the identity if $G$ is not a limit group),
$\auto_i\in Mod(\limitgroup_i)$, for $1\le i< m$, and $\homo'\in
Hom(\limitgroup_m,\F)$ (recall $\limitgroup_m$ is
free) such that $\homo=\homo'q_{m-1}\auto_{m-1}\cdots q_1\auto_1q\auto$.
\end{definition}

\begin{remark}\label{r:mr}
The key property of an \mr-diagram for $G$ is that, for $\homo\in
Hom(G,\f)$, there is a branch of the diagram through which $\homo$
\mr-factors. This provides an answer to Question~1 in that $Hom(G,\f)$
is parametrized by branches of an \mr-diagram and, for each branch as
above, $Mod(G)\times Mod(\limitgroup_1)\times\cdots\times
Mod(\limitgroup_{m-1})\times Hom(\limitgroup_m,\f)$. Note that if
$\limitgroup_m$ has rank $n$, then $Hom(\limitgroup_m,\F)\cong\F^n$.
\end{remark}

In \cite{zs:tarski8}, Sela constructed \mr-diagrams with respect to
hyperbolic groups. In her thesis \cite{ea:thesis}, Emina Alibegovi\'c
constructed \mr-diagrams with respect to limit groups. More recently,
Daniel Groves \cite{dg:mr1,dg:mr2} constructed \mr-diagrams with
respect to torsion-free groups that are hyperbolic relative to a
collection of free abelian subgroups.

\subsection{Abelian subgroups of limit groups}
\begin{cor}\label{c:abelian fg}
Abelian subgroups of limit groups are \fg\ and free.
\end{cor}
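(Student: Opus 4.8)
The plan is to establish torsion-freeness and finite generation separately; ``free'' is then automatic, a finitely generated torsion-free abelian group being free abelian.

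\emph{Torsion-freeness.} Every limit group is $\omega$-residually free (Lemma~\ref{l:limit group is orf}), and $\omega$-residually free groups are torsion free (Exercise~\ref{e:torsion free}), so every subgroup of a limit group --- in particular every abelian one --- is torsion free. It remains to prove finite generation. Let $A$ be an abelian subgroup of a limit group $\Gamma$. By Exercise~\ref{e:max abelian} we may enlarge $A$ to the maximal abelian subgroup $M$ containing it, and since subgroups of finitely generated free abelian groups are finitely generated, it suffices to show that $M$ is finitely generated. Using $\omega$-residual freeness one checks that $M = Z_\Gamma(a)$ for any $1\neq a\in M$: if $x,y\in Z_\Gamma(a)$ did not commute, then, $\Gamma$ being $\omega$-residually free, there would be $\homo\colon\Gamma\to\f$ injective on $\{1,a,[x,y]\}$; but then $\homo(x)$ and $\homo(y)$ would lie in the centralizer of $\homo(a)\neq1$ in $\f$, which is cyclic, forcing $\homo([x,y])=1$, a contradiction. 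Thus $\Gamma$ is commutative transitive and its maximal abelian subgroups are exactly the nontrivial centralizers --- the structural handle I want.

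\emph{Finite generation.} I would induct on the height of an \mr-diagram for $\Gamma$, finite by Corollary~\ref{c:main}. If $\Gamma$ is free then every abelian subgroup is cyclic and we are done. Otherwise the Main Theorem provides a factor set $\{q_i\colon\Gamma\to\Gamma_i\}$ with each $\Gamma_i$ a limit group occurring one level lower, so by the inductive hypothesis every abelian subgroup of every $\Gamma_i$ is finitely generated. The key step is then to show that, after replacing $M$ by a conjugate, some $q_i$ restricts to an injection $M\hookrightarrow\Gamma_i$; granting this, $q_i(M)$ is abelian, hence finitely generated by induction, hence so is $M$, and the induction closes. (If one can only show that some $q_i$ preserves the rank of $M$, the argument can still be finished using the following observation, proved by a ``roots in a free group'' trick: $M$ has no nontrivial element that is divisible in $M$ by arbitrarily large integers. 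Indeed, if $a=b_k^{m_k}$ in $M$ with $m_k\to\infty$, choose $\homo\colon\Gamma\to\f$ injective on $\{1,a,b_1,\dots,b_N\}$; then $\homo(a)\neq1$ is an $m_k$-th power in $\f$ for all $k\le N$, whereas, writing a nontrivial element of $\f$ as $u^m$ with $u$ not a proper power --- possible since centralizers in $\f$ are cyclic --- such an element is an $m_k$-th power only when $m_k\mid m$, impossible once $N$ is large; together with finite rank this forces finite generation.)

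\emph{The main obstacle} is the injectivity, up to conjugacy, of some $q_i$ on $M$, and it is genuinely delicate. The naive attempt --- pick a nontrivial $b_i$ in the intersection of $M$ with the kernel of $q_i$ for each of the finitely many $i$, map $\Gamma$ to $\f$ injectively on $\{b_i\}$, and contradict the factor-set property --- fails, since that property only produces a factorization of $\homo\circ\auto$ through some $q_i$ for a \emph{modular} automorphism $\auto$ depending on $\homo$, and $\auto$ moves the $b_i$ unpredictably; worse, because everything is ultimately organized around homomorphisms to the \emph{free} group $\f$, which cannot detect an abelian subgroup of rank $\geq2$, a single $q_i$ really can collapse $M$. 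Overcoming this requires the more refined form of the Main Theorem (page~\pageref{t:main}), in which the $q_i$ can be taken injective on the rigid and abelian vertex groups of an associated \gad\ of $\Gamma$, together with the structural fact that a maximal abelian subgroup of $\Gamma$, if non-cyclic, is conjugate into such a vertex group (cyclic ones being finitely generated already). Reconciling the modular automorphisms with this injectivity is where I expect the real work to lie.
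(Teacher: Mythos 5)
You have correctly isolated the real difficulty: the factor-set property only yields a factorization of $\homo\circ\auto$ through some $q_i$ for a \emph{homomorphism-dependent} modular automorphism $\auto$, so the naive element-by-element argument breaks down. That diagnosis is accurate and shows real understanding of what makes Lemma~\ref{l:inject} hard to apply directly. However, your proposed repair does not work. You appeal to ``the more refined form of the Main Theorem, in which the $q_i$ can be taken injective on the rigid and abelian vertex groups of an associated \gad,'' but the Main Theorem as stated contains no such injectivity clause --- the only refinement is that the $\limitgroup_i$ are limit groups and that $\auto$ may be taken to be the identity when $G$ is not a limit group. You appear to be importing the conditions from the proof of Lemma~\ref{l:clg} (or from the definition of \clg), but those apply to a \emph{single} carefully chosen quotient, not to the quotients of a factor set, and in any case injectivity there is only on the \emph{peripheral} subgroup $\cl(A)$, not on the whole non-cyclic abelian vertex group. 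Your fallback via the ``roots in a free group'' observation is a true and useful fact, but it does not by itself convert finite rank into finite generation, so that route also leaves a gap.

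The paper closes the gap quite differently, and the key tool you are missing is Lemma~\ref{l:epi}: there is an epimorphism $r\colon\limitgroup\to A$ onto a free abelian group such that every modular automorphism of $\limitgroup$ is \emph{trivial} (inner) on $M\cap\operatorname{Ker}(r)$. Restricting $r$ to $M$ splits $M=A\oplus A'$, where $A\cong r(M)$ is \fg\ and $A'=M\cap\operatorname{Ker}(r)$ is fixed up to conjugacy by every element of $\operatorname{Mod}(\limitgroup)$. This is precisely what neutralizes the ``$\auto$ moves things unpredictably'' problem: for any $\homo\colon\limitgroup\to\f$, the factor set produces $\auto$ with $\homo\auto$ factoring through some $q_i$, and because $\auto|A'$ is inner, $\homo|A'$ itself factors through $q_i|A'$ with no precomposition needed. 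Lemma~\ref{l:inject} now applies \emph{verbatim} to $H=A'$, giving that some $q_i|A'$ is injective; since $\operatorname{Hom}(\limitgroup_i,\f)\subsetneq\operatorname{Hom}(\limitgroup,\f)$, induction (terminating by Lemma~\ref{l:homs}) shows $A'$, hence $M$, is \fg. Lemma~\ref{l:epi} in turn is built from Exercise~\ref{e:restriction to M}, Remark~\ref{r:retraction}, and Exercise~\ref{e:generators}: each generalized Dehn twist that acts nontrivially on $M$ does so through a \fg\ free abelian retract $M_\auto$ of $\limitgroup$, and one takes $r$ to be the product of all the associated retractions. So the structure of the argument is: first tame the modular group on a cofinite-rank piece of $M$, \emph{then} run the injectivity argument --- not the other way around.
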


Along with the Main Theorem, the proof of Corollary~\ref{c:abelian
  fg} will depend on an exercise and two lemmas.

\begin{exercise}[{\cite[Lemma~2.3]{zs:tarski1}}]\label{e:restriction
  to M}
Let $M$ be a non-cyclic maximal abelian subgroup of the limit group
$\limitgroup$.
\begin{enumerate}
\item
If $\limitgroup=A*_C B$ with $C$ abelian, then $M$ is conjugate into
$A$ or $B$.
\item
If $\limitgroup=A*_C$ with $C$ abelian, then either $M$ is conjugate
into $A$ or there is a stable letter $t$ such that $M$ is conjugate to
$M'=\langle C,t\rangle$ and $\limitgroup=A*_C M'$.
\end{enumerate}
As a consequence, if $\auto\in Mod(\limitgroup)$ is a generalized Dehn
twist and $\auto|M$ is non-trivial, then there is an element
$\gamma\in\limitgroup$ and a \gad\ $\Delta=M*_C B$ or $\Delta=M$ for
$\limitgroup$ such that, up to conjugation by $\gamma$, $\auto$ is
induced by a unimodular automorphism of $M/\cl(M)$ (as in
Definition~\ref{d:gdt}). (Hint: Use Exercise~\ref{e:max abelian}.)
\end{exercise}

\begin{lemma}\label{l:inject}
Suppose that $\limitgroup$ is a limit group with factor set
$\{q_i:\limitgroup\to G_i\}$. If $H$ is a (not necessarily \fg)
subgroup of $\limitgroup$ such that, for every homomorphism $\homo:
\limitgroup\to\f$, $\homo|H$ factors through some $q_i|H$
(pre-compositions by automorphisms of $\limitgroup$ not needed) then,
for some $i$, $q_i|H$ is injective.
\end{lemma}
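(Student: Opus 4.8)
The plan is to argue by contradiction, building a single bad homomorphism that defeats all the $q_i$ simultaneously. Suppose that for every $i$ the restriction $q_i|H$ has non-trivial kernel, so we may pick $1 \neq h_i \in H \cap \ker(q_i)$. Since $\limitgroup$ is a limit group, it is $\omega$-residually free by Lemma \ref{l:limit group is orf}; applying this to the finite set $\{h_1, \dots, h_k\}$ produces a homomorphism $\homo : \limitgroup \to \f$ that is injective on $\{h_1,\dots,h_k\}$, in particular $\homo(h_i) \neq 1$ for every $i$. By hypothesis $\homo|H$ factors as $\homo|H = \psi \circ (q_i|H)$ for some $i$ and some $\psi$. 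But then $\homo(h_i) = \psi(q_i(h_i)) = \psi(1) = 1$, contradicting $\homo(h_i) \neq 1$. Hence some $q_i|H$ is injective.

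The only point that needs a little care is that the factoring hypothesis is stated for the given (fixed) collection $\{q_i\}$ with no modular automorphism inserted, so the factoring map $\psi$ is a genuine homomorphism out of $q_i(H) \subseteq G_i$ and the computation $\homo(h_i) = \psi(q_i(h_i))$ is immediate. I do not expect any real obstacle here: the lemma is essentially a pigeonhole/diagonal argument, and the substantive input — that a limit group is $\omega$-residually free — has already been established. One should note that $H$ need not be finitely generated, but this is harmless: only the finitely many chosen elements $h_1, \dots, h_k$ enter the argument, and $\omega$-residual freeness is exactly the statement that applies to an arbitrary finite subset.
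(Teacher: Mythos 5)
Your argument is the same as the paper's: pick a non-trivial $h_i$ in each $\ker(q_i|H)$, use $\omega$-residual freeness to get a single $\homo$ separating them from the identity, and derive a contradiction from the factoring hypothesis. One tiny slip: you should apply $\omega$-residual freeness to the set $\{1, h_1, \dots, h_k\}$ (as the paper does), since injectivity on $\{h_1,\dots,h_k\}$ alone does not yet give $\homo(h_i)\neq 1$.
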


\begin{proof}
Suppose not and let $1\not= h_i\in Ker(q_i|H).$ Since $\limitgroup$ is
a limit group, there is $\homo\in Hom(\limitgroup,\f)$ that is
injective on $\{1,h_1,\cdots, h_n\}$. On the other hand, $\homo|H$
factors through some $q_i|H$ and so $h_i=1$, a contradiction.
\end{proof}

\begin{lemma}\label{l:epi}
Let $M$ be a non-cyclic maximal abelian subgroup of the limit group
$\limitgroup$. There is an epimorphism $r:\limitgroup\to A$ where $A$
is free abelian and every modular automorphism of\, $\limitgroup$ is
trivial\footnote{agrees with the restriction of an inner automorphism
of $\limitgroup$.}  when restricted to $M\cap Ker(r)$.
\end{lemma}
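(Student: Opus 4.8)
The plan is to build the homomorphism $r:\limitgroup\to A$ by iterating the Main Theorem along a single branch of an \mr-diagram, using at each stage the fact that $M$ (being abelian, hence a subgroup of a limit group) maps in a controlled way, and then extracting $r$ from the free group $\limitgroup_m$ at the end of the branch together with Lemma~\ref{l:inject} and Exercise~\ref{e:restriction to M}.

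First I would observe that, by Corollary~\ref{c:abelian fg} (which we are allowed to use, as it precedes this lemma in logical order--or, if one prefers to avoid circularity, directly from the Main Theorem plus Lemma~\ref{l:homs}), $M$ is finitely generated free abelian. Fix an \mr-diagram for $\limitgroup$ and a branch $\limitgroup\overset{q}{\to}\limitgroup_1\to\cdots\to\limitgroup_m$ with $\limitgroup_m$ free. The key point is to track the image of $M$ down this branch. At each step $\limitgroup_j\to\limitgroup_{j+1}$ the map is precomposed by a modular automorphism $\auto_j$ of $\limitgroup_j$ before factoring; here is where Exercise~\ref{e:restriction to M} enters. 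Since $M$ is maximal abelian in $\limitgroup$, its image at each stage is either again maximal abelian (and the relevant generalized Dehn twists either fix it or act as a unimodular automorphism of $M/\cl(M)$, which is invertible, hence does not shrink $M$) or it has already become cyclic. Thus either some $q_j$ (composed with the automorphisms) is injective on $M$ and then $M$ embeds in the free group $\limitgroup_m$, forcing $M$ cyclic--contrary to hypothesis, so in fact this branch must kill part of $M$--or, more to the point, I would instead argue directly: choose $r$ to be the composition of the branch maps (with the modular automorphisms, restricted suitably) landing in the free group $\limitgroup_m$, followed by a retraction of $\limitgroup_m$ onto a free abelian, in fact onto $\Z$, chosen so that the image of a chosen basis element of $M$ survives.

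More carefully, here is the cleaner route I would take. Apply Lemma~\ref{l:inject} to the pair $(\limitgroup, M)$: if no $q_i$ from the factor set is injective on $M$, then there is some $\homo:\limitgroup\to\f$ injective on a finite subset of $M$ that does not factor through any $q_i|M$. But $M$ abelian maps to $\f$ with cyclic image, so $\homo(M)$ is cyclic, and we may take $r$ related to such an $\homo$ restricted appropriately. In general, let $r':\limitgroup\to A'$ with $A'$ free abelian be $\homo$ precomposed with nothing (the point of Lemma~\ref{l:inject} is precisely that \emph{no} automorphisms are needed), so that $r'|M$ already has the property that $M\cap Ker(r'|M)$ is the kernel of a homomorphism from a maximal abelian subgroup. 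Now I claim any $\auto\in Mod(\limitgroup)$ restricted to $M\cap Ker(r')$ is trivial: by Exercise~\ref{e:generators} it suffices to treat generalized Dehn twists, and by Exercise~\ref{e:restriction to M} such a $\auto$, if non-trivial on $M$, is (up to conjugation) a unimodular automorphism of $M/\cl(M)$ fixing $\cl(M)$; since $Ker(r')\cap M$ has finite index behavior controlled by $\peri$/$\cl$, and a unimodular (determinant $1$) automorphism of a finitely generated free abelian group that fixes a finite-index-type sublattice corresponding to $\cl(M)$ is trivial there (cf.\ the last sentence of Remark~\ref{r:retraction}), we get $\auto$ trivial on $M\cap Ker(r)$. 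Setting $r = r'$ and $A = A'$ finishes it, after checking $r$ is onto (replace $A$ by the image).

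The main obstacle I anticipate is making precise the claim that a modular automorphism that is ``non-trivial on $M$'' is nevertheless trivial on $M\cap Ker(r)$--i.e.\ choosing $r$ so that its kernel inside $M$ is forced to lie in the part of $M$ on which every unimodular automorphism of $M/\cl(M)$ acts trivially. This is exactly the interplay between the peripheral subgroup $\cl(M)$ (on which, by Remark~\ref{r:retraction}, every relevant automorphism is the identity once it is the identity on $\peri(M)$) and the ``free direction'' $M/\cl(M)$ (where unimodularity is not enough to force triviality on a proper subgroup unless that subgroup is inside $\cl(M)$). So the real content is: arrange $r$ with $M\cap Ker(r)\subseteq$ (a conjugate of) $\cl(M)$ for every \gad\ realizing a potentially non-trivial modular automorphism on $M$; Exercise~\ref{e:restriction to M} reduces the supply of such \gad's to the single shape $\Delta = M*_C B$ or $\Delta = M$, which makes this arrangement possible by taking $r$ to be (built from) a homomorphism to $\Z$ killing $\peri(M)$, extended over $\limitgroup$ via the retraction in Remark~\ref{r:retraction}. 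I would expect the write-up to spend most of its length on this bookkeeping, with the existence of $r$ itself being the easy part once Lemma~\ref{l:inject} and Exercise~\ref{e:restriction to M} are in hand.
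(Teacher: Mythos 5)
Your proposal drifts through several ideas, and the one that finally lands in the last paragraph is close to the paper's argument; but the first two paragraphs contain genuine errors, and even the final paragraph misses the construction that makes everything work.

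The most serious problem is the invocation of Lemma~\ref{l:inject}. You write that ``the point of Lemma~\ref{l:inject} is precisely that no automorphisms are needed,'' but this is a \emph{hypothesis} of that lemma, not a conclusion: Lemma~\ref{l:inject} only applies to a subgroup $H$ for which one can already prove that every $\homo|H$ factors through some $q_i|H$ without pre-composing by an automorphism. For $H=M$ this is exactly what you do \emph{not} have --- modular automorphisms can move $M$ around non-trivially, and removing that obstruction is the whole point of Lemma~\ref{l:epi}. Indeed, in the paper the logical order is the reverse of yours: Lemma~\ref{l:epi} is used to produce a subgroup $A'=M\cap Ker(r)$ on which modular automorphisms are trivial, and only \emph{then} is Lemma~\ref{l:inject} applied to $A'$ in the proof of Corollary~\ref{c:abelian fg}. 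Your appeal to Corollary~\ref{c:abelian fg} at the start is circular for the same reason (it is stated before Lemma~\ref{l:epi} in the text but its proof uses it), and your proposed fix --- ``directly from the Main Theorem plus Lemma~\ref{l:homs}'' --- is just a sketch of that same circular proof.

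The final paragraph correctly identifies the real content --- force $M\cap Ker(r)$ into $\cl(M)$ for every relevant \gad\ --- but you then say this can be arranged ``by taking $r$ to be (built from) a homomorphism to $\Z$ killing $\peri(M)$.'' That will not do. Different generalized Dehn twists $\auto$ arise from different \gad's $M*_{C}B$ with possibly different edge groups $C$ and hence different peripheral subgroups $\cl_\auto(M)$, so no single map to $\Z$ (nor even a single retraction $\limitgroup\to M/\cl_\auto(M)$) can simultaneously trap $M\cap Ker(r)$ inside all of them. The paper's actual proof takes, for each generalized Dehn twist $\auto$, the retraction $r_\auto:\limitgroup\to M_\auto\cong M/\cl_\auto(M)$ furnished by Remark~\ref{r:retraction}, so that $\auto$ is trivial on $M\cap Ker(r_\auto)$, and then forms the \emph{product} $r=\Pi_\auto r_\auto:\limitgroup\to\Pi_\auto M_\auto$. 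The indexing set here is potentially infinite (we do not yet know $M$ is finitely generated), so the key observation --- which your write-up omits --- is that the image $A$ of $r$ is a quotient of the finitely generated group $\limitgroup$ and a subgroup of a torsion-free abelian group, hence finitely generated free abelian. No MR-diagram, no Lemma~\ref{l:inject}, and no a priori finiteness of $M$ are needed.
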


\begin{proof}
By Exercise~\ref{e:generators}, it is enough to find $r$ such that
$\auto|M\cap Ker(r)$ is trivial for every generalized Dehn twist
$\auto\in Mod(\limitgroup)$. By Exercise~\ref{e:restriction to M} and
Remark~\ref{r:retraction}, there is a \fg\ free abelian subgroup
$M_\auto$ of $M$ and a retraction $r_\auto:\limitgroup\to M_\auto$
such that $\auto|M\cap Ker(r_\auto)$ is trivial. Let
$r=\Pi_{\auto}r_\auto: \limitgroup\to\Pi_\auto M_\auto$ and let $A$ be
the image of $r$. Since $\limitgroup$ is \fg, so is $A$. Hence $A$ is
free abelian.
\end{proof}

\begin{proof}[Proof of Corollary~\ref{c:abelian fg}]
Let $M$ be a maximal abelian subgroup of a limit group
$\limitgroup$. We may assume that $M$ is not cyclic. Since
$\limitgroup$ is torsion free, it is enough to show that $M$ is
\fg. By restricting the map $r$ of Lemma~\ref{l:epi} to $M$, we see
that $M=A\oplus A'$ where $A$ is \fg\ and each $\auto|A'$ is trivial. Let
$\{q_i:\limitgroup\to\limitgroup_i\}$ be a factor set for
$\limitgroup$ given by Theorem~\ref{t:main}. By Lemma~\ref{l:inject},
$A'$ injects into some $\limitgroup_i$. Since
$Hom(\limitgroup_i,\f)\subsetneq Hom(\limitgroup,\f)$, we may conclude
by induction that $A'$ and hence $M$ is \fg.
\end{proof}

\subsection{Constructible limit groups}
It will turn out that limit groups can be built up inductively from
simpler limit groups. In this section, we give this description and
list some properties that follow.
\begin{definition}\label{d:clg}
We define a hierarchy of \fg\ groups -- if a group belongs to this
hierarchy it is called a \CLG\footnote{\textsf{C}onstructible
\textsf{L}imit \textsf{G}roup}.

Level 0 of the hierarchy consists of \fg\ free groups.

A group $\limitgroup$ belongs to level $\leq n+1$ iff either it has a
free product decomposition $\limitgroup=\limitgroup_1*\limitgroup_2$
with $\limitgroup_1$ and $\limitgroup_2$ of level $\leq n$ or it has a
homomorphism $\rho:\limitgroup\to\limitgroup'$ with $\limitgroup'$ of
level $\leq n$ and it has a \gad\ such that
\begin{itemize}
\item $\rho$ is injective on the peripheral subgroup of each abelian
vertex group.
\item $\rho$ is injective on each edge group $E$ and at least one of
  the images of $E$ in a vertex group of the
  one-edged splitting induced by $E$ is a maximal abelian subgroup.
\item The image of each $\QH$-vertex group is a non-abelian subgroup of
$\limitgroup'$.
\item For every rigid vertex group $\blackbox$, $\rho$
is injective on the {\it envelope $\tilde\blackbox$ of $\blackbox$},
defined by first replacing each abelian vertex with the peripheral
subgroup and then letting $\tilde\blackbox$ be the subgroup of the
resulting group generated by $\blackbox$ and by the centralizers of
incident edge-groups.
\end{itemize}
\end{definition}

\begin{example}
A \fg\ free abelian group is a \clg\ of level one (consider a one-point
\gad\ for $\Z^n$ and $\rho:\Z^n\to\langle
0\rangle$). The fundamental group of a closed surface $S$ with
$\chi(S)\le -2$ is a \clg\ of level one. For example, an orientable
genus 2 surface is a union of 2 punctured tori and the retraction to
one of them determines $\rho$. Similarly, a non-orientable genus 2
surface is the union of 2 punctured Klein bottles.
\end{example}

\begin{example}
Start with the circle and attach to it 3 surfaces with one boundary
component, with genera 1, 2, and 3 say. There is a retraction to the
surface of genus 3 that is the union of the attached surfaces of genus
1 and 2. This retraction sends the genus 3 attached surface say to the
genus 2 attached surface by ``pinching a handle''. The \gad\ has a
central vertex labeled $\Z$ and there are 3 edges that emanate from
it, also labeled $\Z$. Their other endpoints are $\QH$-vertex
groups. The map induced by retraction satisfies the requirements so
the fundamental group of the 2-complex built is a \CLG.
\end{example}

\begin{example}
Choose a primitive\footnote{no proper root} $w$ in the \fg\ free group
$F$ and form $\limitgroup=F*_\Z F$, the {\it double of $F$ along $\langle
w\rangle$} (so $1\in \Z$ is identified with $w$ on both sides). There
is a retraction $\limitgroup\to F$ that satisfies the requirements
(both vertices are rigid), so $\limitgroup$ is a \CLG.
\end{example}

The following can be proved by induction on levels.

\begin{exercise}
Every \CLG\ is \fp , in fact coherent. Every \fg\ subgroup of a \CLG\ is
a \CLG. (Hint: a graph of coherent groups over \fg\ abelian
  groups is coherent.)
\end{exercise}

\begin{exercise}
Every abelian subgroup of a \CLG\ $\limitgroup$ is \fg\ and free, and
there is a uniform bound to the rank. There is a finite $K(\limitgroup,1)$.
\end{exercise}

\begin{exercise}\label{e:principal splitting}
Every non-abelian, freely indecomposable \CLG\ admits a
{\normalfont principal splitting} over $\Z$: $A*_\Z B$ or $A*_\Z$ with $A$, $B$
non-cyclic, and in the latter case $\Z$ is maximal abelian in the whole
group. 
\end{exercise}

\begin{exercise}\label{e:clg}
Every \CLG\ is $\omega$-residually free.
\end{exercise}

The last exercise is more difficult than the others. It explains where
the conditions in the definition of \CLG\ come from. The idea is to
construct homomorphisms $G\to \F$ by choosing complicated modular
automorphisms of $G$, composing with $\rho$ and then with a
homomorphism to $\F$ that comes from the inductive assumption.

\begin{example}
Consider an index 2 subgroup $H$ of an \fg\ free group $F$ and choose
$g\in F\setminus H$. Suppose that $G:=H*_{\langle g^2\rangle}\langle
g\rangle$ is freely indecomposable and admits no principal cyclic
splitting. There is the obvious map $G\to F$, but
$G$ is not a limit group (Exercise~\ref{e:principal splitting} and
Theorem~\ref{t:tfae}). This shows the necessity of the last condition
in the definition of \clg's.
\footnote{The element $g:=a^2b^2a^{-2}b^{-1}\notin H:=\langle a, b^2,
bab^{-1} \rangle\subset F:=\langle a,b\rangle$ is such an
example. This can be seen from the fact that if $\langle x,y,z\rangle$
denotes the displayed basis for $H$, then
$g^2=x^2yx^{-2}y^{-1}z^2yz^{-2}$ is Whitehead reduced and each basis
element occurs at least 3 times.}
\end{example}

In Section~\ref{s:tfae}, we will show:
\begin{thm}\label{t:tfae}
For an \fg\ group $G$, the following are equivalent.
\begin{enumerate}
\item $G$ is a \CLG.
\item $G$ is $\omega$-residually free.
\item $G$ is a limit group.
\end{enumerate}
\end{thm}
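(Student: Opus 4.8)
The plan is to prove Theorem~\ref{t:tfae} by establishing the cycle of implications $(1)\Rightarrow(2)\Rightarrow(3)\Rightarrow(1)$, using the Main Theorem as the main engine for the last step.

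First, $(1)\Rightarrow(2)$: every \CLG\ is $\omega$-residually free. This is exactly Exercise~\ref{e:clg}, so it may be invoked directly; the content is the inductive construction of homomorphisms to $\F$ using complicated modular automorphisms, as explained in the remarks following that exercise.

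Second, $(2)\Rightarrow(3)$: every $\omega$-residually free group is a limit group. This is one of the early exercises in the paper, so it too may be quoted. (Conceptually: take any finite generating set, enumerate the nontrivial elements of progressively larger balls, and for each ball pick a homomorphism to $\F$ injective on it; this sequence is stable with trivial stable kernel, exhibiting $\limitgroup$ itself as the quotient $G/\stabker\homo_i$ with $G=\limitgroup$.)

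Third, and this is where the real work lies, $(3)\Rightarrow(1)$: every limit group is a \CLG. Here I would argue by induction on $Hom(\limitgroup,\F)$ using the well-ordering supplied by Lemma~\ref{l:homs} (or Corollary~\ref{c:homs}): a proper epimorphism of limit groups strictly shrinks the $Hom$-set, so there is no infinite descending chain, and induction along proper epimorphisms is legitimate. The base case is when $\limitgroup$ is free, which is a \CLG\ of level $0$. For the inductive step, suppose $\limitgroup$ is not free. If $\limitgroup$ is freely decomposable, write $\limitgroup=\limitgroup_1*\limitgroup_2$ with both factors proper quotients (hence limit groups with strictly smaller $Hom$-sets, since they are retracts), apply the inductive hypothesis to each, and conclude $\limitgroup$ is a \CLG\ by the free-product clause of Definition~\ref{d:clg}. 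If $\limitgroup$ is freely indecomposable, invoke the Main Theorem to get a factor set $\{q_i:\limitgroup\to\limitgroup_i\}$ with each $\limitgroup_i$ a limit group; since $\limitgroup$ is itself a limit group the Main Theorem does \emph{not} let us take $\alpha$ trivial, but the $\limitgroup_i$ are proper quotients, so by induction each $\limitgroup_i$ is a \CLG. The task is then to promote the factor-set data to the structure demanded by the \CLG\ definition: one needs a single \gad\ for $\limitgroup$ and a single homomorphism $\rho:\limitgroup\to\limitgroup'$ to a lower-level \CLG\ satisfying the four bulleted conditions (injectivity on peripheral subgroups of abelian vertices, injectivity on edge groups with a maximal-abelian image, non-abelian images of \QH-vertices, injectivity on envelopes of rigid vertices).

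The hard part is precisely this last promotion step: extracting from the factor set a \emph{single} \gad\ together with a map to a lower \CLG\ meeting all four conditions. I expect this to require the structural analysis of limit groups developed in the body of the paper — the abelian \jsj-type decomposition, the fact that abelian subgroups are \fg\ and free (Corollary~\ref{c:abelian fg}), and control over how modular automorphisms act on edge and vertex groups — together with a bookkeeping argument showing that the maps $q_i$ can be arranged to be injective on edge groups, peripheral subgroups, and rigid envelopes after passing to a suitable refinement. This is genuinely the bulk of Section~\ref{s:tfae}, and it is where all the machinery proving the Main Theorem gets reused; the two easy implications are essentially formal.
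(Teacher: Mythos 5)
Your treatment of $(1)\Rightarrow(2)\Rightarrow(3)$ is exactly the paper's (both are exercises), and your overall inductive frame for $(3)\Rightarrow(1)$ --- induct on the well-founded complexity from Lemma~\ref{l:homs}, free groups as base case, split off free factors when $\limitgroup$ is freely decomposable --- is also what the paper does. But the crucial freely-indecomposable step, which you correctly identify as ``where the real work lies,'' is left as a vague hope and, more importantly, the route you sketch for it does not match what the paper actually does and has a genuine gap.

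You propose to take the factor set $\{q_i\}$ from the Main Theorem and then ``promote'' one of its maps to a single $\rho$ satisfying the four \CLG\ conditions. But there is no reason a map $q_i$ from a factor set should be injective on peripheral subgroups, edge groups, or rigid envelopes of an abelian \jsj-decomposition; a factor set only promises that each $\homo$ factors through \emph{some} $q_i$ after precomposing with \emph{some} $\auto\in Mod(\limitgroup)$, and different $\homo$'s use different $q_i$'s and different twists. Nothing in the factor-set conclusion singles out a $q_i$ with the required injectivity. The paper (Lemma~\ref{l:clg}) instead bypasses the factor set entirely in this step: it picks an $\omega$-residually-free test sequence $\{\homo_i\}$ with $\homo_i$ injective on the ball of radius $i$, replaces each $\homo_i$ by an equivalent \emph{short} map $\hat\homo_i$, and sets $q:\limitgroup\to\limitgroup':=\limitgroup/\stabker\hat\homo_i$. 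That this is a \emph{proper} epimorphism is exactly Exercise~\ref{e:tree shorten} (shortening forces nontrivial stable kernel), so induction applies to $\limitgroup'$. The four \CLG\ conditions for $q$ relative to the abelian \jsj-decomposition $\Delta$ then follow from the fact (Remark~\ref{r:trivial}) that \emph{every} modular automorphism restricts to an inner automorphism on rigid vertex groups, peripheral subgroups of abelian vertices, and edge groups: since $\hat\homo_i$ and $\homo_i$ differ only by a modular automorphism and conjugation, $\stabker\hat\homo_i$ and $\stabker\homo_i$ agree on those subgroups, and $\stabker\homo_i$ was chosen trivial. The non-abelian-image condition on \QH-vertices uses that \QH-vertex groups of the abelian \jsj\ are canonical (preserved up to conjugacy by $Mod(\limitgroup)$). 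So the essential idea you are missing is: don't reach for the factor set; choose the test sequence to be both $\omega$-residually-free \emph{and} short, and let the triviality of $Mod$ on the ``rigid part'' of the \jsj\ do the injectivity work automatically.
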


The fact that $\omega$-residually free groups are \clg's is due to
O.~Kharlampovich and A.~Myasnikov \cite{km:limitgroups}.  Limit groups
act freely on $\R^n$-trees; see Remeslennikov \cite{vr:rntrees} and
Guirardel \cite{vg:limitgroups}.  Kharlampovich-Myasnikov
\cite{km:limit2} prove that limit groups act freely on $\Z^n$-trees
where $\Z^n$ is lexicographically ordered. Remeslennikov
\cite{vr:2resfree} also demonstrated that 2-residually free groups are
$\omega$-residually free.

\section{The Main Proposition}
\begin{definition}
An \fg\ group is {\it generic} if it is torsion free, freely
indecomposable, non-abelian, and not a closed surface group.
\end{definition}

The Main Theorem will follow from the next proposition.
\begin{mainprop}
Generic limit groups have factor sets.
\end{mainprop}

Before proving this proposition, we show how it implies the Main
Theorem.
\begin{definition}
Let $G$ and $G'$ be \fg\ groups. The minimal number of generators for
$G$ is denoted $\mu(G)$. We say that $G$ is {\it simpler} than $G'$ if
there is an epimorphism $G'\to G$ and either $\mu(G)<\mu(G')$ or
$\mu(G)=\mu(G')$ and $Hom(G,\f)\subsetneq Hom(G',\f)$.
\end{definition}

\begin{remark}\label{r:complexity}
It follows from Lemma~\ref{l:homs} that every sequence $\{G_i\}$ with
$G_{i+1}$ simpler than $G_i$ is finite.
\end{remark}

\begin{definition}
If $G$ is an \fg\ group, then by $RF(G)$ denote the {\it universal
residually free quotient of $G$}, i.e.\ the quotient of $G$ by the
(normal) subgroup consisting of elements killed by every homomorphism
$G\to\f$.
\end{definition}

\begin{remark}\label{r:rf} 
$Hom(G,\f)=Hom(RF(G),\f)$ and for every proper quotient $G'$ of $RF(G)$,
$Hom(G',\f)\subsetneq Hom(G,\f)$.
\end{remark}

\begin{proof}[The Main Proposition implies the Main Theorem]
Suppose that $G$ is an \fg\ group that is not free. By
Remark~\ref{r:complexity}, we may assume that the Main Theorem holds
for groups that are simpler than $G$. By Remark~\ref{r:rf}, we may
assume that $G$ is residually free, and so also torsion
free. Examples~\ref{e:abelian} and \ref{e:closed surface} show that
the Main Theorem is true for abelian and closed surface groups. If
$G=U*V$ with $U$ non-free and freely indecomposable and with $V$
non-trivial, then $U$ is simpler than $G$. So, $U$ has a factor set
$\{q_i:U\to L_i\}$, and $\{q_i*Id_V:U*V\to L_i*V\}$ is a factor
set for $G$.

If $G$ is not a limit group, then there is a non-empty finite subset
$\{g_i\}$ of $G$ such that any homomorphism $G\to\f$ kills
one of the $g_i$. We then have a factor set $\{G\to H_i:=G/\llangle
g_i\rrangle\}$. Since $Hom(H_i,\f)\subsetneq Hom(G,\f)$, by induction the
Main Theorem holds for $H_i$ and so for $G$.

If $G$ is generic and a limit group, then the Main Proposition gives a
factor set $\{q_i:G\to G_i\}$ for $G$. Since $G$ is residually free,
each $G_i$ is simpler than $G$. We are assuming that the Main Theorem
then holds for each $G_i$ and this implies the result for $G$.
\end{proof}

\section{Review: Measured laminations and $\R$-trees}\label{s:jsj}
The proof of the Main Proposition will use a theorem of Sela
describing the structure of certain real trees. This in turn depends
on the structure of measured laminations. In Section~\ref{s:more
geometric}, we will give an alternate approach that only uses the
lamination results. First these concepts are reviewed. A more
leisurely review with references is \cite{mb:handbook}.

\subsection{Laminations}
\begin{definition}
A {\it measured lamination} $\Lambda$ on a simplicial 2-complex $K$
consists of a closed subset $|\Lambda|\subset |K|$ and a {\it
transverse measure} $\mu$. $|\Lambda|$ is disjoint from the vertex
set, intersects each edge in a Cantor set or empty set, and intersects
each 2-simplex in 0, 1, 2, or 3 families of straight line segments
spanning distinct sides. The measure $\mu$ assigns a non-negative
number $\int_I \mu$ to every interval $I$ in an edge whose endpoints
are outside $|\Lambda|$. There are two conditions:
\begin{enumerate}
\item
{\bf (compatibility)} If two intervals $I$, $J$ in two sides of the
same triangle $\Delta$ intersect the same components of
$|\Lambda|\cap\Delta$ then $\int_I \mu=\int_J \mu$.
\item
{\bf (regularity)} $\mu$ restricted to an edge is equivalent under a
``Cantor function'' to the Lebesgue measure on an interval in $\R$.
\end{enumerate}

A path component of $|\Lambda|$ is a {\it leaf}.
\end{definition}

Two measured laminations on $K$ are considered equivalent if they
assign the same value to each edge.

\begin{prop}[Morgan-Shalen \cite{ms:valuations1}]
Let $\Lambda$ be a measured lamination on compact $K$. Then
$$\Lambda=\Lambda_1\sqcup\cdots\sqcup\Lambda_k$$ so that each
$\Lambda_i$ is either {\normalfont minimal} (each leaf is dense in
$|\Lambda_i|$) or {\normalfont simplicial} (each leaf is compact, a
regular neighborhood of $|\Lambda_i|$ is an $I$-bundle over a leaf and
$|\Lambda_i|$ is a Cantor set subbundle).
\end{prop}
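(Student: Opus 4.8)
The plan is to decompose $\Lambda$ one piece at a time, using the transverse measure to detect where a minimal sublamination sits. First I would consider the closure relation among leaves: for a leaf $\leaf$, let $\overline{\leaf}$ denote its closure in $|\Lambda|$. Since $K$ is compact and $|\Lambda|$ is closed, $\overline{\leaf}$ is a compact sublamination (it is saturated: the closure of a union of leaves is again a union of leaves, because the local product structure in each simplex is preserved under taking closures). Among all leaf closures, pick one, say $|\Lambda_1|:=\overline{\leaf}$, that is minimal with respect to inclusion; such a minimal element exists by a Zorn-type argument, or more concretely because a strictly decreasing chain of compact sublaminations would violate the regularity condition on how much measure sits on each edge (each edge carries only finitely much mass, and a proper inclusion of saturated closed sets drops the edgewise measure by a definite amount on some edge, or else the two laminations are equivalent). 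By minimality, every leaf of $\Lambda_1$ is dense in $|\Lambda_1|$, so $\Lambda_1$ is minimal.

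Next I would argue that $|\Lambda_1|$ is relatively open in $|\Lambda|$, hence also closed-and-open, so that $\Lambda\setminus\Lambda_1$ is again a measured lamination on $K$ to which we can induct. The point is that if a leaf $\leaf'$ of $\Lambda$ accumulates onto $|\Lambda_1|$, then $\overline{\leaf'}\supset|\Lambda_1|$, and $\overline{\leaf'}$ is a sublamination; if $\overline{\leaf'}\neq|\Lambda_1|$ this contradicts minimality unless $\leaf'\subset|\Lambda_1|$ already. So no leaf outside $|\Lambda_1|$ can limit onto it, giving the separation $|\Lambda|=|\Lambda_1|\sqcup|\Lambda_1^c|$ with both parts closed. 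Restricting $\mu$ to each part gives measured laminations, and $\Lambda_1$ is minimal by construction. I then iterate on $\Lambda\setminus\Lambda_1$.

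It remains to see that the process terminates and that the leftover pieces with no minimal sublamination are simplicial. Termination follows from a complexity count: each $\Lambda_i$ that is minimal and carries positive mass uses up a definite amount of transverse measure on some edge, but a minimal lamination also cannot be too numerous because the number of components meeting a fixed edge is bounded (a triangle meets $|\Lambda|$ in at most $3$ families of parallel segments, so the combinatorics of how components run through the $2$-complex is controlled); alternatively one runs the argument on weights and uses that the space of equivalence classes of measured laminations on compact $K$ is finite-dimensional, so there is no infinite descending family. For the structure of a lamination $\Lambda'$ with no minimal sublamination: every leaf closure equals a sublamination, and the minimal ones among those are, by the dichotomy just established, minimal laminations — contradiction unless there are none, i.e.\ unless no leaf closure is a proper sublamination, which forces every leaf to be compact. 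A compact leaf $\leaf$ has a regular neighborhood that is an $I$-bundle over $\leaf$ (transversality), and the other leaves nearby are the other fibers-over-$\leaf$ of this bundle; running this over all of the (compact) $|\Lambda'|$ and using regularity of $\mu$ shows $|\Lambda'|$ is a Cantor set subbundle of an $I$-bundle over a leaf, i.e.\ $\Lambda'$ is simplicial.

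The main obstacle I expect is the termination/finiteness step: showing that only finitely many minimal pieces can be split off. The clean way is to invoke that the transverse measures on a fixed compact $2$-complex form a finite-dimensional cone (this is essentially the statement that $K$ has finitely many ``edge coordinates'' and the compatibility equations cut out a polyhedral cone), so an infinite direct-sum decomposition is impossible; making the separation argument in the second paragraph fully rigorous — that distinct minimal sublaminations are genuinely separated by disjoint open saturated neighborhoods — is the other point that needs care, and it is where compactness of $K$ and the local product structure in each $2$-simplex are used essentially.
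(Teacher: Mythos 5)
The paper states this proposition as a cited result of Morgan--Shalen and supplies no proof of its own, so this is really a comparison with the standard treatment in the literature (Morgan--Shalen, Gaboriau--Levitt--Paulin, Bestvina--Feighn). Your skeleton --- extract a minimal sublamination by a Zorn/compactness argument, split it off as a clopen piece, iterate, bound the number of pieces by total transverse mass, and identify the residual as simplicial --- is the right shape for a proof and matches the standard line of argument.

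The genuine gap is the separation step in your second paragraph, and the problem is not lack of rigor but an invalid deduction. You take $|\Lambda_1|=\overline{\ell}$ minimal under inclusion among leaf closures and assert that a leaf $\ell'$ accumulating on $|\Lambda_1|$ with $\overline{\ell'}\supsetneq|\Lambda_1|$ ``contradicts minimality.'' It does not: minimality of $|\Lambda_1|$ forbids a strictly \emph{smaller} leaf closure, but says nothing about larger ones --- $\overline{\ell'}\supsetneq|\Lambda_1|$ merely witnesses that $\overline{\ell'}$ is not itself a minimal leaf closure, which is no contradiction at all. Topology alone cannot rule out a leaf spiralling onto a minimal set; this is precisely where the \emph{transverse measure} is indispensable, and it does not appear in your separation argument. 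Roughly: if $\ell'$ accumulates on $|\Lambda_1|$ then, since $K$ has finitely many edges, $\ell'$ returns to some fixed edge $e$ infinitely often near $|\Lambda_1|\cap e$, and the holonomy along $\ell'$ carries a transversal of $e$ of definite $\mu$-mass to infinitely many pairwise disjoint transversals of $e$; invariance of $\mu$ together with $\int_e\mu<\infty$ then gives a contradiction. Controlling that the pushed transversals have uniformly positive mass is the actual technical content of the isolation lemma, and it is what your proof is missing. A secondary issue: the minimal leaf closure $\overline{\ell}$ produced by Zorn's lemma is the single compact leaf $\ell$ whenever $\ell$ is compact, and then it does not meet edges in Cantor sets, so it is not a sublamination in the sense of the definition; the Zorn step thus conflates the simplicial and minimal cases, and one should peel off compact-leaf closures separately and feed them into the $I$-bundle analysis rather than treat them as minimal components.
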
 

There is a theory, called the {\it Rips machine}, for analyzing minimal
measured laminations. It turns out that there are only 3 qualities.

\begin{example}[Surface type]
Let $S$ be a compact hyperbolic surface (possibly with totally
geodesic boundary). If $S$ admits a pseudoAnosov homeomorphism then it
also admits {\it filling measured geodesic laminations} -- these are
measured laminations $\Lambda$ (with respect to an appropriate
triangulation) such that each leaf is a biinfinite geodesic and all
complementary components are ideal polygons or crowns. Now to get the
model for a general surface type lamination attach finitely many
annuli $S^1\times I$ with lamination $S^1\times \mbox{(Cantor set)}$
to the surface along arcs transverse to the geodesic lamination. If
these additional annuli do not appear then the lamination is of
{\it pure surface type}. See Figure~\ref{f:surface}.
\end{example}

\begin{figure}
\includegraphics{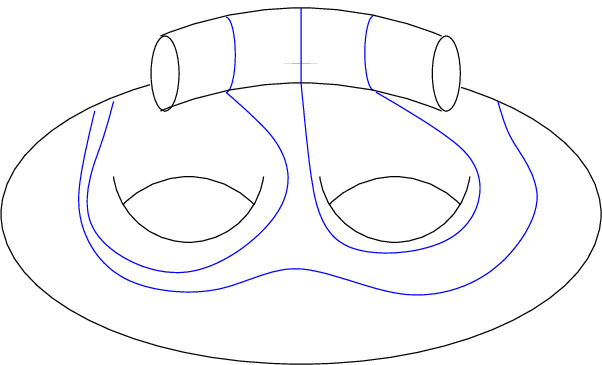}
\caption{A surface with an additional annulus and some pieces of
leaves.}\label{f:surface}
\end{figure}

\begin{example}[Toral type]
Fix a closed interval
$I\subset \R$, a finite collection of pairs $(J_i,J'_i)$ of closed
intervals in $I$, and isometries $\gamma_i:J_i\to J_i'$ so that:

\begin{enumerate}
\item
If $\gamma_i$ is orientation reversing then $J_i=J_i'$ and the
midpoint is fixed by $\gamma_i$.
\item The length of the intersection of all $J_i$ and $J_i'$ (over all
$i$) is more than twice the translation length of each orientation
preserving $\gamma_i$ and the fixed points of all orientation
reversing $\gamma_i$ are in the middle third of the intersection.
\end{enumerate} 

Now glue a foliated band for each pair $(J_i,J_i')$ so that following
the band maps $J_i$ to $J_i'$ via $\gamma_i$. Finally, using Cantor
functions blow up the foliation to a lamination. There is no need to
explicitly allow adding annuli as in the surface case since they
correspond to $\gamma_i=Id$. The subgroup of $Isom(\R)$ generated
by the extensions of the $\gamma_i$'s is the {\it Bass group}. The
lamination is minimal iff its Bass group is not discrete.
\end{example}

\begin{example}[Thin type]
This is the most mysterious type of all. It was discovered by Gilbert
Levitt, see \cite{gl:thin}. In the {\it pure} case (no annuli
attached) the leaves are 1-ended trees (so this type naturally lives
on a 2-complex, not on a manifold). By performing certain moves
(sliding, collapsing) that don't change the homotopy type (respecting
the lamination) of the complex one can transform it to one that
contains a (thin) band. This band induces a non-trivial free product
decomposition of $\pi_1(K)$, assuming that the component is a part of
a resolution of a tree (what's needed is that loops that follow leaves
until they come close to the starting point and then they close up are
non-trivial in $\pi_1$).

In the general case we allow additional annuli to be glued, just like
in the surface case. Leaves are then 1-ended trees with circles
attached.
\end{example}

\begin{thm}[``Rips machine'']\label{t:rips}
Let $\Lambda$ be a measured lamination on a finite 2-complex $K$, and
let $\Lambda_i$ be a minimal component of $\Lambda$. There is a
neighborhood $N$ (we refer to it as a {\normalfont standard}
neighborhood) of $|\Lambda_i|$, a finite 2-complex $N'$ with measured
lamination $\Lambda'$ as in one of 3 model examples, and there is a
$\pi_1$-isomorphism $f:N\to N'$ such that $f^*(\Lambda')=\Lambda$.
\end{thm}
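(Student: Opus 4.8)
The plan is to run the Rips machine. First I would replace $K$ by a subdivision and take $N$ to be a regular neighborhood of $|\Lambda_i|$ organized as a \emph{band complex}: a finite graph $\Gamma$ (the \emph{base}, carved out of the $1$-skeleton) together with finitely many \emph{bands} $B_1,\dots,B_m$, each a foliated rectangle $J_j\times I$ glued to $\Gamma$ along $J_j\times\partial I$ by homeomorphisms onto two subarcs of $\Gamma$; the lamination is recovered as the union of $\Gamma$ with the horizontal fibers $J_j\times\{t\}$, after the Cantor blow-up permitted by the regularity axiom. Equivalently one records the data as a finite \emph{system of partial isometries} $\{\gamma_j\colon J_j\to J_j'\}$ of the forest $\Gamma$ -- this is the form in which the three model examples are presented. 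That such an organization exists, and that it yields a $\pi_1$-isomorphism carrying the model lamination on $N$ back to the given one, is a direct check: the compatibility and regularity axioms are exactly what let $\Lambda_i$ be blown down to a foliation supported on bands, and the transverse measure is transported because it is determined edge by edge. Minimality of $\Lambda_i$ becomes \emph{minimality} of the band complex: no proper compact sub-band-complex carries all the nondegenerate leaves.

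Next I would introduce the elementary moves of the Rips machine and check that they are harmless. These are: (a) \emph{subdivision} of the base; (b) \emph{collapse} of a maximal ``free'' subcomplex -- the part of the base and bands swept only by leaf segments that never return, which deformation retracts onto the rest; (c) the \emph{Rips move} -- when the feet of two bands overlap in a sufficiently long arc, cut one band along a leaf and reglue, executing one subtractive step of a Euclidean algorithm on the base. Each move is realized by a homotopy equivalence of band complexes that is the identity on the lamination up to the equivalence ``same value on every edge''; hence $\pi_1$, the homeomorphism type of the leaves (compact, or a $1$-ended tree, or surface-like, with or without attached circles), and discreteness of the Bass group are all invariant under the moves. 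So it suffices to bring $N$, by finitely many moves, to one of the three model forms, and then to take $N'$ and $f$ to be that form together with the composite of the homotopy equivalences built along the way.

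Finally I would assign to a band complex a lexicographically ordered \emph{complexity} -- something like (number of bands; total base length not covered by a single band; $\dots$) -- tuned so that a Rips move strictly decreases it except on a finite list of degenerate configurations, and then analyze the stable configurations. The terminating case of the machine produces a \emph{simplicial} lamination, which minimality of $\Lambda_i$ excludes (here one uses the Morgan--Shalen splitting of $\Lambda$ into minimal and simplicial pieces); so the machine stabilizes, and exactly three regimes survive. If the complement of the lamination in $N$ becomes a surface on which the leaves behave like a filling geodesic lamination, one is in \emph{surface type}. If all the band holonomies come to share a common invariant arc, so that the Bass group embeds in $Isom(\R)$ -- necessarily with non-discrete closure, again by minimality -- one is in \emph{toral type}. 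What remains, after a collapse exposes a genuinely \emph{thin} band, is Levitt's \emph{thin type} (\cite{gl:thin}), with leaves $1$-ended trees and circles attached in the impure case. In each regime the stabilized band complex is literally a model $N'$ of the corresponding type above, which finishes the proof.

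The step I expect to be the main obstacle is the termination-and-trichotomy analysis just sketched: choosing a complexity that actually works, proving that the Rips move decreases it off the degenerate locus, and classifying the fixed configurations of the move -- in particular certifying that the ``thin'' residue genuinely is Levitt's type and admits no further simplification. This is the technical core of the Rips theory; the measured-lamination hypotheses (compatibility, regularity, and the Morgan--Shalen decomposition) are precisely what make the complexity well-defined and the subtractive step legitimate. A fuller account with references is in \cite{mb:handbook}.
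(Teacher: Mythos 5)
The paper does not prove Theorem~\ref{t:rips}. Section~\ref{s:jsj} is explicitly a review section (``Review: Measured laminations and $\R$-trees''), and the theorem is stated there as known background, with the reader pointed to \cite{mb:handbook} for ``a more leisurely review with references'' and to \cite{gl:thin} for the thin type; the Rips machine itself is due to Rips, and the published accounts are due to Bestvina--Feighn and to Gaboriau--Levitt--Paulin. So there is no in-paper argument for your sketch to be compared against.

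Taken on its own terms your outline is the standard one and the architecture is right: organize a regular neighborhood of $|\Lambda_i|$ as a band complex (equivalently a system of partial isometries of a finite graph), run elementary moves realized as $\pi_1$-preserving homotopy equivalences that transport the transverse measure, set up a lexicographic complexity that forces termination or stabilization, and classify the stable configurations into surface, toral, and thin types. The steps you flag as the main obstacle---choosing a workable complexity, proving the Rips move decreases it off a controlled degenerate locus, and certifying that the thin residue really is Levitt's type with no further simplification---are genuinely the technical core; they occupy an entire paper in the references and cannot be compressed to a page. Two small corrections if you were to flesh this out. First, the actual machine uses a longer list of elementary moves than the three you name (Bestvina--Feighn's list includes band collapses, slides, and collapses from free ends in addition to subdivision and the subtraction move), and the termination argument depends on the interplay of the whole list. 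Second, the surface and toral models also permit attached annuli---the ``pure'' versions are special cases---so your trichotomy should allow added annuli uniformly, not only in the thin case.
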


We refer to $\Lambda_i$ as being of {\it surface}, {\it toral}, or
{\it thin} type.

\subsection{Dual trees}
Let $G$ be an \fg\ group and let $\hat K$ be a simply connected
2-dimensional simplicial $G$-complex so that, for each simplex
$\Delta$ of $\hat K$,
$Stab(\Delta)=Fix(\Delta)$.\footnote{$Stab(\Delta):=\{g\in G\mid
g\Delta=\Delta\}$ and $Fix(\Delta):=\{g\in G\mid gx=x, x\in\Delta\}$}
Let $\hat\Lambda$ be a $G$-invariant measured lamination in $\hat
K$. There is an associated real $G$-tree $T(\hat\Lambda)$ constructed as
follows.  Consider the pseudo-metric on $\hat K$ obtained by
minimizing the $\hat\Lambda$-length of paths between points. The real
tree $T(\hat\Lambda)$ is the associated metric space\footnote{identify
points of pseudo-distance 0}. There is a natural map $\hat K\to
T(\hat\Lambda)$ and we say that $(\hat K, \hat\Lambda)$ is a {\it model}
for $T(\hat\Lambda)$ if
\begin{itemize}
\item
for each edge $\hat e$ of $\hat K$, $T(\hat\Lambda\mid\hat e)\to
T(\hat\Lambda)$ is an isometry (onto its image) and
\item
the quotient $\hat K/G$ is compact. 
\end{itemize}
If a tree $T$ admits a model $(\hat K,\hat\Lambda)$, then we say that
$T$ is {\it dual} to $(\hat K,\hat\Lambda)$. This is denoted
$T=Dual(\hat K,\hat\Lambda)$. We will use the quotient
$(K,\Lambda):=(\hat K,\hat\Lambda)/G$ with simplices decorated (or
labeled) with stabilizers to present a model and sometimes abuse
notation by calling $(K,\Lambda)$ a model for $T$.

\begin{remark}
Often the $G$-action on $\hat K$ is required to be free. We have
relaxed this condition in order to be able to consider actions of \fg\
groups. For example, if $T$ is a minimal\footnote{no proper invariant
subtrees}, simplicial $G$-tree (with the metric where edges have
length one\footnote{This is called the {\it simplicial metric} on
$T$.}) then there is a lamination $\hat\Lambda$ in $T$ such that
$Dual(T,\hat\Lambda)=T$.\footnote{The metric and simplicial topologies
on $T$ don't agree unless $T$ is locally finite. But, the action of
$G$ is by isomorphisms in each structure. So, we will be
sloppy and ignore this distinction.}
\end{remark}

If $S$ and $T$ are real $G$-trees, then an equivariant map
$\morphism:S\to T$ is a {\it morphism} if every compact segment of $S$
has a finite partition such that the restriction of $\morphism$ to
each element is an isometry or trivial\footnote{has image a point}.

If $S$ is a real $G$-tree with $G$ \fp, then there is a real $G$-tree
$T$ with a model and a morphism $\morphism: T\to S$. The map
$\morphism$ is obtained by constructing an equivariant map to $S$ from
the universal cover of a 2-complex with fundamental group $G$. In
general, if $(\hat K,\hat\Lambda)$ is a model for $T$ and if $T\to S$
is a morphism then the composition $\hat K\to T\to S$ is a {\it
resolution} of $S$.

\subsection{The structure theorem}
Here we discuss a structure theorem (see Theorem~\ref{t:structure}) of
Sela for certain actions of an \fg\ torsion free group $G$ on real
trees. The actions we consider will usually be super
stable\footnote{If $J\subset I$ are (non-de\-gen\-er\-ate) arcs in $T$
and if $Fix_T(I)$ is non-trivial, then $Fix_T(J)=Fix_T(I)$}, have
primitive\footnote{root-closed} abelian (non-degenerate) arc
stabilizers, and have trivial tripod\footnote{a cone on 3 points}
stabilizers. There is a short list of basic examples.

\begin{example}[Pure surface type]
A real $G$-tree $T$ is {\it of pure surface type} if it is dual to the
universal cover of $(K,\Lambda)$ where $K$ is a compact surface and
$\Lambda$ is of pure surface type.  We will usually use the alternate
model where boundary components are crushed to points and are labeled
$\mathbb Z$.
\end{example}

\begin{example}[Linear] 
The tree $T$ is {\it linear} if $G$ is abelian, $T$ is a line and
there an epimorphism $G\to\mathbb Z^n$ such that $G$ acts on $T$ via a
free $\mathbb Z^n$-action on $T$. In particular, $T$ is dual to $(\hat
K,\hat\Lambda)$ where $\hat K$ is the universal cover of the
2-skeleton of an $n$-torus $K$. For simplicity, we often complete $K$
with its lamination to the whole torus. This is a special case of a
toral lamination.
\end{example}

\begin{example}[Pure thin]
The tree $T$ is {\it pure thin} if it is dual to the universal cover
of a finite 2-complex $K$ with a pure thin lamination $\Lambda$. If
$T$ is pure thin then $G\cong F*V_1*\cdots *V_m$ where $F$ is
non-trivial and \fg\ free and $\{V_1,\cdots,V_m\}$ represents the
conjugacy classes of non-trivial point stabilizers in $T$.
\end{example}

\begin{example}[Simplicial]
The tree $T$ is {\it simplicial} if it is dual to $(\hat
K,\hat\Lambda)$ where all leaves of $\Lambda:=\hat\Lambda/G$ are
compact. If $T$ is simplicial it is convenient to crush the leaves and
complementary components to points in which case $\hat K$ becomes a
tree isomorphic to $T$.
\end{example}

If $\K$ is a graph of 2-complexes with underlying graph of groups
$\G$\footnote{for each bonding map $\phi_e:G_e\to G_v$ there are
simplicial $G_e$- and $G_v$-complexes $\hat K_e$ and $\hat K_v$
together with a $\phi_e$-equivariant simplicial map $\Phi_e:\hat
K_e\to \hat K_v$} then there is a simplicial $\pi_1(\G)$-space $\hat
K(\K)$ obtained by gluing copies of $\hat K_e\times I$ and $\hat K_v$'s
equipped with a simplicial $\pi_1(\G)$-map $\hat K(\K)\to T(\G)$ that
crushes to points copies of $\hat K_e\times\{point\}$ as well as the
$\hat K_v$'s.

\begin{definition}
A real $G$-tree is {\it very small} if the action is
non-trivial\footnote{no point is fixed by $G$}, minimal, the
stabilizers of non-degenerate arcs are primitive abelian, and the
stabilizers of non-degenerate tripods are trivial.
\end{definition}

\begin{thm}[{\cite[special case of Section~3]{zs:accessibility}} See also \cite{vg:fgtrees}]\label{t:structure}
Let $T$ be a real $G$-tree. Suppose that $G$ is generic and that $T$
is very small and super stable.  Then, $T$ has a model.

Moreover, there is a model for $T$ that is a graph of spaces
such that each edge space is a point with non-trivial abelian stabilizer and 
each vertex space with restricted lamination is either 
\begin{itemize}
\item\textnormal{(point)} 
a point with non-trivial stabilizer,
\item\textnormal{(linear)}
a non-faithful action of an abelian group on the (2-skeleton of the)
universal cover of a torus with an irrational \footnote{no essential
loops in leaves} lamination, or
\item\textnormal{(surface)} a faithful action of a free group on the universal
cover of a surface with non-empty boundary (represented by points with
$\Z$-stabilizer) with a lamination of pure surface type.
\end{itemize}
\end{thm}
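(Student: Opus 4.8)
The plan is to first build \emph{some} model for $T$ by resolving, and then to use the Rips machine to upgrade that model to one of the required form, discarding along the way the lamination types forbidden by the hypotheses. Since $G$ is finitely presented (in our application it is a limit group), choose a finite $2$-complex $K$ with $\pi_1(K)\cong G$ and an equivariant map from its universal cover $\hat K$ to $T$; transporting the simplicial structure and pulling back $\hat\Lambda$-lengths gives a $G$-invariant measured lamination $\hat\Lambda$ on $\hat K$, a model $(\hat K,\hat\Lambda)$ for a $G$-tree $T'$, and a morphism $p\colon T'\to T$ resolving $T$. The goal of the first part is to arrange that $p$ is an isometry, i.e.\ that $T$ itself has a model.

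The heart of the argument, and the step I expect to be hardest, is showing that the ensuing process terminates. One runs the Rips-machine moves on $(\hat K,\hat\Lambda)$ together with the folds imposed by the morphism $p$; each non-trivial such move either strictly decreases a complexity of the band complex (number of cells together with Euler-characteristic-type data of the lamination) or else would force a non-trivial point stabilizer for a tripod of $T$, or would violate arc-stability of $T$. Since $T$ is super stable and has trivial tripod stabilizers, only finitely many non-trivial moves can occur, so the process halts with a model whose dual tree is $T$ itself. This is exactly the accessibility statement of \cite{zs:accessibility} (see also \cite{vg:fgtrees}), and carrying out its bookkeeping is the main obstacle.

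Now decompose $\Lambda=\hat\Lambda/G$ by Morgan--Shalen into its simplicial part and its minimal components $\Lambda_i$, each put into one of the three standard models of Theorem~\ref{t:rips}, and eliminate the forbidden types. Thin components cannot occur: after the sliding and collapsing moves a thin component contains a band inducing a non-trivial free product decomposition of the fundamental group of a subcomplex that resolves a non-degenerate subtree of $T$, so the relevant loops are non-trivial in $G$, contradicting free indecomposability. For a toral component, the Bass group is a non-discrete subgroup of $\mathrm{Isom}(\R)$; an orientation-reversing generator fixes a point, which --- together with torsion-freeness of $G$ and triviality of tripod stabilizers --- forces the component to be linear, dual to a torus with an irrational lamination and an abelian vertex group acting non-faithfully, as recorded in the statement. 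A surface component may be taken of \emph{pure} surface type, since an attached annulus $S^1\times I$ laminated by $S^1\times(\text{Cantor set})$ is simplicial and can be split off as an edge; and the corresponding vertex group acts faithfully, because a non-trivial kernel would stabilize a non-degenerate arc and hence, by super stability, every arc, contradicting triviality of tripod stabilizers. Since $G$ is not a closed surface group and is freely indecomposable, the surface carries non-empty boundary labelled by its (non-trivial) incident edge groups, so the vertex group is free.

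Finally, reassemble: crushing each complementary region and each leaf of the simplicial part, together with the annuli split off from surface components, to a point turns $(\hat K,\hat\Lambda)$ into a graph of spaces over the underlying graph of groups with single-point edge spaces. Each edge stabilizer is a non-degenerate arc stabilizer of $T$, hence primitive abelian by very smallness, and is non-trivial since otherwise $G$ would split freely. The remaining vertex spaces are points with non-trivial stabilizer, the linear pieces, and the pure surface-type pieces identified above, which is precisely the asserted model for $T$.
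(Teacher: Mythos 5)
The paper does not prove this theorem; it is stated as a citation to Sela's acylindrical accessibility paper (and to Guirardel), so there is no in-paper argument to compare against. Assessed on its own terms, your sketch has the right overall shape—resolve, run the Rips machine, kill thin via free indecomposability, normalize surface and toral pieces—but it has a genuine gap at the very first step: you begin by choosing a finite $2$-complex $K$ with $\pi_1(K)\cong G$, i.e.\ you assume $G$ is finitely presented. The theorem is stated for $G$ finitely generated and generic, and in the paper's logical order this matters: at the point where Theorem~\ref{t:structure} is invoked (Section~\ref{s:proof}), it is not yet known that limit groups are finitely presented (that only follows after Theorem~\ref{t:tfae}, whose proof depends on the Main Theorem, which in turn rests on this structure theorem). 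Your parenthetical ``in our application it is a limit group'' is therefore circular. This is exactly the obstruction the paper flags in its introduction—``a more geometric proof which avoids the use of the full Rips theory for finitely generated groups acting on $\R$-trees''—and is the reason Section~\ref{s:more geometric} exists: there one resolves through a finitely presented approximation $G\twoheadrightarrow\limitgroup$ and works with that, rather than with $\limitgroup$ directly. An argument along your lines for fg $G$ would need a further exhaustion/stabilization step (working with an increasing sequence of fp covers and controlling how the band complexes stabilize), which you do not address and which is where the real work in \cite{zs:accessibility} lies.

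Two smaller points. First, the termination argument (``each non-trivial move strictly decreases a complexity or would force a tripod stabilizer / violate arc-stability'') is precisely the content of acylindrical accessibility and is not something one can wave at: it is the heart of the cited result, and your sketch gives no complexity function or case analysis. Second, in the toral case you need to justify that the vertex \emph{group}, not merely its image in $\mathrm{Isom}(\R)$, is abelian. The kernel of the action on the line is an arc stabilizer of $T$, hence primitive abelian by very smallness, and the image is a subgroup of $\R$, hence abelian, but abelian-by-abelian is not abelian. Getting an abelian vertex group requires additional surgery on the model (splitting off the non-abelian part into the simplicial skeleton), and the paper's own Caution about Lemma~1.6 of \cite{zs:tarski1}—where the linear stabilizer is $\Z^3$ acting non-faithfully—shows the kernel of the linear action is generally non-trivial and must be handled with care. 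Your orientation-reversal discussion does not address this; the issue persists even when the Bass group is orientation-preserving.
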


\begin{remark}
For an edge space $\{point\}$, the restriction of the lamination to
$\{point\}\times I$ may or may not be empty. It can be checked that
between any two points in models as in Theorem~\ref{t:structure} there
are $\Lambda$-length minimizing paths. Thin pieces do not arise
because we are assuming our group is freely indecomposable.
\end{remark}

\begin{remark}\label{r:more structure}
Theorem~\ref{t:structure} holds more generally if the assumption that
$G$ is freely indecomposable is replaced by the assumption that $G$ is
freely indecomposable rel point stabilizers, i.e.\ if $\mathcal V$ is
the subset of $G$ of elements acting elliptically\footnote{fixing a
point} on $T$, then $G$ cannot be expressed non-trivially as $A*B$ with
all $g\in\mathcal V$ conjugate into $A\cup B$.
\end{remark}

\begin{figure}
\includegraphics{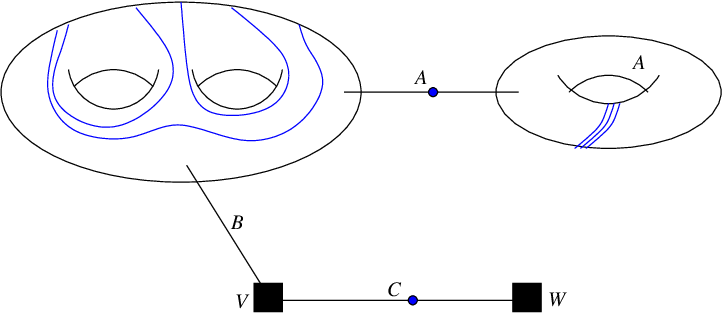}
\caption{A model with a surface vertex space, a linear vertex space,
  and 2 rigid vertex spaces (the black boxes). The groups $A$, $B$ and
  $C$ are abelian with $A$ and $B$ infinite cyclic. Pieces of some
  leaves are also indicated by wavy lines and dots. For example, the
  dot on the edge labeled $C$ is one leaf in a Cantor set of
  leaves.}\label{f:model}
\end{figure}

We can summarize Theorem~\ref{t:structure} by saying that $T$ is a
non-trivial finite graph of simplicial trees, linear trees, and trees
of pure surface type (over trivial trees). See Figure~\ref{f:model}.

\begin{cor}\label{c:structure}
If $G$ and $T$ satisfy the hypotheses of
Theorem~\ref{t:structure}, then $G$ admits a non-trivial \gad\
$\Delta$. Specifically, $\Delta$ may be taken to be the \gad\ induced
by the boundary components of the surface vertex spaces and the
simplicial edges of the model. The surface vertex spaces give rise to
the \qh-vertices of $\Delta$ and the linear vertex spaces give rise to
the abelian vertices of $\Delta$.
\end{cor}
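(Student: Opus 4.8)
The plan is to read $\Delta$ directly off the model furnished by Theorem~\ref{t:structure} and then check that the resulting graph of groups satisfies the bookkeeping conditions in the definition of a \gad. First I would invoke Theorem~\ref{t:structure} to fix a model for $T$ that is a graph of spaces $\K$ with underlying graph of groups $\Delta$; the graph-of-spaces structure presents $G$ as the fundamental group of $\Delta$, so $\Delta$ is a splitting of $G$, and we may assume it is minimal (collapse edges carrying no essential information, using minimality of $T$). By the ``moreover'' clause of Theorem~\ref{t:structure} each edge space is a point with non-trivial abelian stabilizer, so the edge groups of $\Delta$ are abelian, as a \gad\ requires.

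Next I would decorate the vertices according to the three types in Theorem~\ref{t:structure}. A surface vertex space is the universal cover of a compact surface $S$ with non-empty boundary carrying a lamination of pure surface type; as explained in the surface-type example, such a lamination is filling, so $S$ admits a pseudo-Anosov homeomorphism and hence is a once-punctured torus or satisfies $\chi(S)\le -2$, and its boundary components — crushed to points with $\Z$-stabilizer — are exactly the attaching loci of the edges of $\Delta$ incident to this vertex. Thus each such vertex meets the \qh\ conditions and we designate it \qh. A linear vertex space is a non-faithful action of an abelian group on the universal cover of a torus, so its vertex group $A$ is abelian; the subgroup $\peri(A)$ generated by the incident edge groups is the peripheral subgroup used in the \gad\ definition, and we designate this vertex abelian. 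A point vertex space with non-trivial non-abelian stabilizer is designated rigid. The remaining point vertices, together with any ``linear'' or ``abelian'' vertex whose group turns out to be cyclic, carry no more than a $\Z$, and these should be absorbed by collapsing an incident edge (or shown not to occur) so that every abelian vertex of $\Delta$ is genuinely non-cyclic. With these choices the assertions that \qh-vertices come from surface vertex spaces and abelian vertices from linear vertex spaces are immediate.

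Finally I would verify that $\Delta$ is non-trivial. If $\Delta$ had no edges it would consist of a single vertex with group $G$: a surface vertex would make $G$ a surface group with non-empty boundary, hence free, contradicting genericity; a linear vertex would make $G$ abelian, again contradicting genericity; a point vertex would make the $G$-action on $T$ trivial, contradicting that $T$ is very small. Hence $\Delta$ has at least one edge and is a non-trivial \gad\ for $G$.

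I expect the main obstacle to be the degenerate-case bookkeeping rather than anything conceptual: one must rule out (or collapse away) cyclic abelian vertices, check that the peripheral structure $\peri(A)\subset\cl(A)\subset A$ coming from the linear pieces is the one invoked in the \gad\ definition, and pin down precisely how the boundary components of the surface vertex spaces match the incident edges (including any boundary component that bounds no simplicial edge of the model). None of this is deep — it is an exercise in unwinding Theorem~\ref{t:structure} — but it is where the care is needed.
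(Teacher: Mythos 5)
The paper states Corollary~\ref{c:structure} without an explicit proof---it is presented as an immediate consequence of Theorem~\ref{t:structure}, the idea being precisely that one reads $\Delta$ off from the graph-of-spaces model. Your proposal is a correct and careful unpacking of exactly that implicit argument: abelian edge groups come from the point edge spaces, surface vertex spaces with filling pure-surface-type laminations yield \qh-vertices, linear vertex spaces (abelian group acting non-faithfully) yield abelian vertices, remaining point spaces are rigid, and non-triviality follows because a single-vertex $\Delta$ would force $G$ to be free (surface with boundary), abelian (linear), or to act trivially on $T$ (point), contradicting genericity or very-smallness. Your flagged worries about cyclic abelian vertices and unmatched boundary components are the right things to be suspicious of, though neither actually causes trouble: a vertex with cyclic (or any) non-\qh\ group may simply be designated rigid, since in Definition~2.4 the \qh/abelian labels are optional designations rather than forced by the isomorphism type, and in the model furnished by Theorem~\ref{t:structure} the boundary components of a surface vertex space are exactly the $\Z$-labeled points where simplicial edges attach, so the matching with incident edge groups is built in. In short, your argument follows the paper's intended route.
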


\subsection{Spaces of trees}
Let $G$ be a \fg\ group and let $\A(G)$ be the set of
minimal, non-trivial, real $G$-trees endowed with the Gromov
topology\footnote{The second author thanks Gilbert Levitt for a
helpful discussion on the Gromov and length topologies.}. Recall, see
\cite{fp:trees,fp:gromov=length,bs:paulin}, that in the Gromov
topology $\lim\{(T_n,d_n)\}=(T,d)$ if and only if: for any finite
subset $K$ of $T$, any $\epsilon>0$, and any finite subset $P$ of $G$,
for sufficiently large $n$, there are subsets $K_n$ of $T_n$ and
bijections $f_n:K_n\to K$ such that
$$|d(gf_n(s_n),f_n(t_n))-d_n(gs_n,t_n)|<\epsilon$$
for all $s_n,t_n\in K_n$ and all $g\in P$. Intuitively, larger and larger
pieces of the limit tree with their restricted actions appear in
nearby trees.

Let $\PA(G)$ be the set of non-trivial real $G$-trees modulo
homothety, i.e.\ $(T,d)\sim (T,\lambda d)$ for $\lambda>0$. Fix a
basis for $\f$ and let $T_\f$ be the corresponding Cayley graph. Give
$T_\f$ the simplicial metric. So, a non-trivial homomorphism
$f:G\to\f$ determines $T_f\in\PA(G)$. Let $X$ be the subset of
$Hom(G,\f)$ consisting of those homomorphisms with non-cyclic
image. The space of interest is the closure $\T(G)$ of (the image of)
$\{T_\homo\mid \homo\in X \}$ in $\PA(G)$.

\begin{prop}[\cite{zs:tarski1}]\label{p:basic}
Every sequence of ho\-mo\-morph\-isms in $X$ has a subsequence
$\{f_{n}\}$ such that $\lim T_{f_{n}}=T$ in $\T(G)$. Further,
\begin{enumerate}
\item
$T$ is irreducible\footnote{$T$ is not a line and doesn't have a fixed end}.
\item
$\stabker \homo_n$ is precisely the kernel
$Ker(T)$ of the action of $G$ on $T$.
\item 
The action of $G/Ker(T)$ on $T$ is very small and super stable.
\item\label{i:clopen}
For $g\in G$, $U(g):=\{T\in\T(G)\mid g\in Ker(T)\}$ is
clopen\footnote{both open and closed}.
\end{enumerate}
\end{prop}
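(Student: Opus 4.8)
The plan is to extract a convergent subsequence using the Gromov compactness machinery and then verify each of the four listed properties separately. First I would observe that, because $\f$ is a fixed non-abelian free group acting on its Cayley graph $T_\f$ with a uniform constant (each generator of $G$ moves the basepoint a bounded distance depending on the word length of its image), any sequence $\{f_n\}$ in $X$ gives rise to trees $T_{f_n}$ that are uniformly Gromov-precompact after rescaling: one rescales $T_{f_n}$ by $1/\lambda_n$ where $\lambda_n=\max_{g\in S}|f_n(g)|$ for a fixed finite generating set $S$ of $G$. Standard compactness (as in \cite{fp:trees,bs:paulin}) then yields a subsequence converging in $\PA(G)$ to some $(T,d)$, and by definition of $\T(G)$ as the closure of $\{T_\homo\mid\homo\in X\}$ the limit lies in $\T(G)$. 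This handles the existence of the subsequence; the remaining work is the four numbered assertions.

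For (1), irreducibility: since each $f_n$ has non-cyclic image, $T_{f_n}$ is an irreducible (non-abelian) action with no global fixed point and no fixed end, and these properties pass to the limit — a limit of actions with two independent hyperbolic elements having transverse axes of comparable length still has that feature, so $T$ is neither a line nor has a fixed end. One should be a little careful that ``non-cyclic image'' does not degenerate in the limit; the rescaling is arranged precisely so that the limiting action is non-trivial, and a separate ping-pong argument rules out the line and the fixed-end cases. For (2), the identification $\stabker f_n = Ker(T)$: if $g\in\stabker f_n$ then $f_n(g)=1$ for almost all $n$, so $g$ acts trivially on $T_{f_n}$ for those $n$, hence fixes all of the approximating finite subsets, hence acts trivially on $T$; conversely if $g\notin\stabker f_n$ then along the subsequence $|f_n(g)|\ge 1$ infinitely often, and one uses that in a free group a nontrivial element together with a large enough piece of the tree forces a definite translation length after rescaling — more precisely, either $g$ is hyperbolic in $T$ (translation length $>0$) or, using that arc stabilizers in $T_{f_n}$ are cyclic and $\f$ is free, $g$ fixes an arc that survives to the limit only if $f_n(g)$ is eventually trivial. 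This last point is the technical heart and I would lean on the structure of centralizers in free groups.

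For (3), that the $G/Ker(T)$-action is very small and super stable: arc stabilizers in $T_{f_n}$ are stabilizers in a free group, hence cyclic and root-closed (primitive), and tripod stabilizers are trivial because two elements fixing a tripod in a tree dual to (a rescaled) Cayley graph of a free group must commute and hence, being cyclic, the whole tripod stabilizer is cyclic and then actually trivial since a cyclic group fixing a tripod would fix a larger subtree; these properties are closed under Gromov limits of actions with uniformly controlled arc and tripod stabilizers, which gives very small and super stable for $T$. Super stability specifically follows because in each $T_{f_n}$ the action is super stable (nested arcs with non-trivial pointwise stabilizer have equal stabilizers, again a consequence of freeness), and super stability is preserved in the limit given the arc-stabilizer control. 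For (4), clopenness of $U(g)$: openness is immediate from the Gromov topology, since if $g\in Ker(T)$ then $g$ acts trivially on all of $T$, in particular on every finite subset, so any tree Gromov-close to $T$ (on a finite subset containing the relevant points and with $g$ among the controlled group elements) also has $g$ acting with very small displacement — but then by super stability and primitivity of arc stabilizers, a nontrivial hyperbolic or small-translation element cannot have arbitrarily small translation length unless it is $1$ in the kernel; and closedness of $U(g)$ follows by the symmetric argument applied to the complement, or directly: if $g\notin Ker(T)$ then $g$ has positive translation length or moves some point a definite amount, a condition that is open in the Gromov topology. I expect the main obstacle to be (4), or rather the shared technical core behind (2) and (4): controlling translation lengths and arc stabilizers uniformly along the sequence so that ``$g$ acts trivially'' is a clopen condition — this is where one genuinely needs super stability together with the fact that the approximating trees come from a free group, and it is the step I would write out most carefully.
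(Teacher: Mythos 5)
Your overall outline matches the paper's intended route: the convergent subsequence comes from Paulin's Convergence Theorem, and the paper leaves items (1)--(4) as ``exercises in Gromov convergence,'' which is what you set out to do. However, several of the facts you invoke about the approximating trees $T_{f_n}$ are incorrect, and the common technical core of (2), (3), and (4) is not correctly isolated. The trees $T_{f_n}$ are the Cayley graph $T_\F$ with $G$ acting via $f_n$, and the $\F$-action on its own Cayley graph is \emph{free}; thus for the $G$-action on $T_{f_n}$ the stabilizer of every point, arc, and tripod is exactly $Ker(f_n)$, which is in general not cyclic. So the claim that ``arc stabilizers in $T_{f_n}$ are \ldots\ cyclic and root-closed'' is wrong, and the deduction that tripod stabilizers are trivial because ``a cyclic group fixing a tripod would fix a larger subtree'' is not a valid step. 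The two facts you actually need are: (i) a non-trivial element of $\F$ acts on $T_\F$ as a hyperbolic isometry with translation length at least $1$ and empty fixed-point set; and (ii) in $\F$ acting on $T_\F$, two hyperbolic elements whose axes overlap in a segment longer than the sum of their translation lengths commute.

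With (i) and (ii), items (2)--(4) all hinge on item (1) via a single mechanism that you gesture at but do not make precise. Since $T$ is irreducible, it contains a genuine tripod $\tau$ with all three legs of definite positive length. If $g$ fixes $\tau$ pointwise and $T_{f_n}\to T$ (after rescaling), then $g$ moves the endpoints of an approximating tripod $\tau_n\subset T_{f_n}$ by amounts tending to $0$. If $f_n(g)\neq 1$, then by (i) $g$ is hyperbolic in $T_{f_n}$, so its displacement function is $\|g\|_{T_{f_n}}+2\,d(\cdot,A_g)$ and all three endpoints of $\tau_n$ must lie near the axis $A_g$, which is a line; this is impossible for a genuine tripod. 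Hence $f_n(g)=1$ for all large $n$. This single argument gives the hard inclusion $Ker(T)\subset\stabker f_n$ in (2), the triviality of tripod stabilizers of $G/Ker(T)$ in (3) (abelian-ness and primitivity of arc stabilizers then follow from (ii) and a similar ``almost fixes a long segment'' argument), and the openness of $U(g)$ in (4). You also have the two directions of (4) reversed: closedness of $U(g)$ is the immediate one (displacements vary continuously in the Gromov topology, so if $g$ acts trivially on each $T_m\to T$ it acts trivially on $T$), whereas openness is the hard direction requiring the tripod argument. Finally, for $\stabker f_n$ to even be defined you must first pass to a stable subsequence (a diagonal argument suffices); this should be built into the initial subsequence extraction.
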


\begin{proof}
The initial statement follows from Paulin's
Convergence Theorem \cite{fp:trees}.\footnote{Paulin's proof assumes
the existence of convex hulls and so does not apply in the generality
stated in his theorem. His proof does however apply in our situation
since convex hulls do exist in simplicial trees.} The further items
are exercises in Gromov convergence.
\end{proof}

\begin{warning*}
Sela goes on to claim that stabilizers of minimal components of the
limit tree are trivial (see Lemma~1.6 of \cite{zs:tarski1}). However,
it is possible to construct limit actions on the amalgam of a rank 2
free group $F_2$ and $\Z^3$ over $\Z$ where one of the generators of
$\Z^3$ is glued to the commutator $c$ of basis elements of $F_2$ and
where the $\Z^3$ acts non-simplicially on a linear subtree with $c$
acting trivially on the subtree but not in the kernel of the
action. As a result, some of his arguments, though easily completed,
are not fully complete.
\end{warning*}

\begin{remark}
There is another common topology on $\A(G)$, the length topology. For
$T\in\A(G)$ and $g\in G$, let $\| g\|_T$ denote the minimum distance
that $g$ translates a point of $T$. The length topology is induced by
the map $\A(G)\to [0,\infty)^G$, $T\mapsto (\| g\|_T)_{g\in G}$. Since
the trees in $\A(G)$ are non-trivial, it follows from \cite[page
64]{se:trees} that $\{0\}$ is not in the image\footnote{\label{f:serre} In fact, if
$\mathcal B_G$ is a finite generating set for $G$ then $\|g\|_T\not=0$
for some word $g$ that is a product of at most two elements of
$\mathcal B_G$.}. Since $\T(G)$ consists of irreducible trees, it
follows from \cite{fp:gromov=length} that the two topologies agree
when restricted to $\T(G)$ and from \cite{cm:trees} that $\T(G)$
injects into $\big([0,\infty)^G\setminus\{0\}\big)/(0,\infty)$.
\end{remark}

\begin{cor}\label{c:basic}
$\T(G)$ is metrizable and compact.
\end{cor}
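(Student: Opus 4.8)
The plan is to deduce both metrizability and compactness from the preceding remark, which identifies $\T(G)$ with a subspace of the projectivization $\big([0,\infty)^G\setminus\{0\}\big)/(0,\infty)$ via the injective length map $T\mapsto (\|g\|_T)_{g\in G}$. First I would observe that since $G$ is finitely generated, say by a finite set $\mathcal B_G$, the length function of a minimal tree is controlled by finitely many values: by the remark (and footnote~\ref{f:serre}) there is always some word $g$ of length at most two in $\mathcal B_G$ with $\|g\|_T\neq 0$. Thus projectively we may normalize each $T\in\T(G)$ so that $\max_{g}\|g\|_T = 1$, where the max is over the finite set $W$ of words of length $\le 2$ in $\mathcal B_G$; moreover, since $\|hg\|_T \le \|h\|_T+\|g\|_T$ and every element of $G$ is a word in $\mathcal B_G$, each $\|g\|_T$ is then bounded by (word length of $g$), a constant independent of $T$. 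Hence the image of $\T(G)$ under the length map lands, after this normalization, in the compact product $\prod_{g\in G}[0,\ell(g)]$, which is metrizable (countable product of metrizable spaces, as $G$ is countable). Restricting a metric from this product makes $\T(G)$ metrizable, giving the first assertion.

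For compactness, I would use Proposition~\ref{p:basic}: every sequence of homomorphisms in $X$ has a subsequence with $\lim T_{f_n}=T$ in $\T(G)$. More precisely, $\T(G)$ is by definition the closure in $\PA(G)$ of $\{T_\homo \mid \homo \in X\}$, so an arbitrary point of $\T(G)$ is a limit of such $T_\homo$'s; and Proposition~\ref{p:basic} guarantees that any sequence drawn from the defining set $\{T_\homo\}$ has a convergent subsequence with limit in $\T(G)$. A routine diagonal argument then upgrades this to: every sequence in $\T(G)$ itself has a subsequence converging (in the Gromov, equivalently length, topology) to a point of $\T(G)$. Since we have already shown $\T(G)$ is metrizable, sequential compactness is equivalent to compactness, so $\T(G)$ is compact.

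The one point requiring care — and the step I expect to be the main obstacle — is the uniform bound on normalized length functions, i.e.\ showing that after projective rescaling the image really does sit inside a fixed compact box $\prod_g [0,\ell(g)]$ rather than escaping to infinity in some coordinate. This is exactly where finite generation of $G$ and the lower bound $\|g\|_T\neq 0$ for some short word $g$ (footnote~\ref{f:serre}, via \cite[page 64]{se:trees}) are essential: they ensure the normalization constant $\max_{g\in W}\|g\|_T$ is bounded away from $0$, so that dividing by it does not blow up the other coordinates, which are themselves controlled by the triangle inequality. Granting this, both conclusions follow formally from the cited facts (\cite{fp:gromov=length} for agreement of the two topologies on $\T(G)$, \cite{cm:trees} for injectivity of the length map) together with Tychonoff and the metrizability of countable products.
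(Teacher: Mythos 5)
Your overall strategy matches the paper's: embed $\T(G)$ in $[0,\infty)^G$ via a section of the projectivized length map, deduce metrizability, then combine sequential precompactness from Proposition~\ref{p:basic} with a diagonal argument. But there is a genuine error in the step you yourself flagged as the crux. The inequality $\|hg\|_T\le\|h\|_T+\|g\|_T$ is \emph{false} for translation lengths of tree isometries: when the axes $A_g$ and $A_h$ are disjoint one has $\|gh\|_T=\|g\|_T+\|h\|_T+2\,d(A_g,A_h)$. A concrete failure already inside $\T(G)$: for $G=\F=F_2=\langle a,b\rangle$ acting on its Cayley tree, $\|a\|=\|bab^{-1}\|=1$ while $\|a\cdot bab^{-1}\|=\|abab^{-1}\|=4$. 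You have conflated translation length with the basepoint displacement $g\mapsto d(x_0,gx_0)$, which \emph{is} subadditive; turning that into a uniform bound on normalized translation lengths requires choosing $x_0$ to minimize displacement and then comparing displacement with translation length, i.e.\ the Culler--Morgan argument, which is substantially more work than you indicate.

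Fortunately the error sits in a detour you do not need. For metrizability it suffices that $[0,\infty)^G$ is itself metrizable (a countable product of metrizable spaces, since $G$ is finitely generated hence countable); the normalization only needs the normalizing quantity to be nonzero, which is footnote~\ref{f:serre}, not a uniform upper bound. That is exactly what the paper does, and then compactness follows from Proposition~\ref{p:basic} via the diagonal argument you describe. The compact-box route via Tychonoff is viable but is precisely the content of Culler--Morgan \cite{cm:trees}; the paper notes it separately as an alternative proof, not as part of this one. So: delete the subadditivity claim and the box $\prod_g[0,\ell(g)]$, replace them with the observation that $[0,\infty)^G$ is metrizable, and the rest of your argument goes through unchanged.
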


\begin{proof}
$[0,\infty)^G\setminus\{0\}\to\big([0,\infty)^G\setminus
\{0\}\big)/(0,\infty)$ has a section over $\T(G)$ (e.g.\ referring to
Footnote~\ref{f:serre}, normalize so that the sum of the translation lengths of
words in $\mathcal B_G$ of length at most two is one). Therefore,
$\T(G)$ embeds in the metrizable space $[0,\infty)^G$. In light of
this, the main statement of Proposition~\ref{p:basic} implies that $\T(G)$ is compact.
\end{proof}

\begin{remark}
Culler and Morgan \cite{cm:trees} show that, if $G$ is \fg, then $\PA(G)$ with the length topology is compact. This can
be used instead of Paulin's convergence theorem to show that $\T(G)$
is compact. The main lemma to prove is that, in the length topology,
the closure in $\PA(G)$ of $\{T_\homo\mid \homo\in X\}$ consists of
irreducible trees.
\end{remark}

\section{Proof of the Main Proposition}\label{s:proof}
To warm up, we first prove the Main Proposition under the additional
assumption that $\limitgroup$ has only trivial abelian splittings,
i.e.\ every simplicial $\limitgroup$-tree with abelian edge
stabilizers has a fixed point. This proof is then modified to apply to
the general case.

\begin{prop}\label{p:special case}
Suppose that $\limitgroup$ is a generic limit group and has only
trivial abelian splittings\footnote{By Proposition~\ref{p:basic} and
Corollary~\ref{c:structure}, generic limit groups have non-trivial
abelian splittings. The purpose of this proposition is to illustrate
the method in this simpler (vacuous) setting.}. Then, $\limitgroup$
has a factor set.
\end{prop}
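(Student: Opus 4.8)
The plan is to exploit the hypothesis that $\limitgroup$ has no non-trivial abelian splitting to show directly that there is a uniform bound on the "size" of any resolution coming from a homomorphism to $\f$, and then to read off a finite factor set. Concretely, I would argue by contradiction: suppose $\limitgroup$ has no factor set. Then no finite collection of proper epimorphisms works, so one can produce a sequence $\{\homo_n\}$ in $Hom(\limitgroup,\f)$ with the property that for every $n$, $\homo_n$ does not factor (after no modular automorphism is needed, since $\limitgroup$ has no non-trivial \gad\ and hence $Mod(\limitgroup)$ consists of inner automorphisms) through $\limitgroup/\llangle g\rrangle$ for any $g$ in the "exhaustion" by which failure is witnessed. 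The point of genericity together with $\limitgroup$ being a limit group is that $\limitgroup$ is residually free and freely indecomposable, so each $\homo_n$ may be taken with non-cyclic image, i.e.\ $\homo_n\in X$ in the notation of Section~4.

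Next I would apply Proposition~\ref{p:basic}: after passing to a subsequence, $T_{\homo_n}\to T$ in $\T(\limitgroup)$, where $T$ is irreducible, the action of $\limitgroup/Ker(T)$ is very small and super stable, and $\stabker\homo_n = Ker(T)$. Now the key tension: if $Ker(T)$ is non-trivial, pick $1\neq g\in Ker(T)$; by clopenness of $U(g)$ (item~(\ref{i:clopen}) of Proposition~\ref{p:basic}), all but finitely many $\homo_n$ kill $g$, so they factor through $\limitgroup/\llangle g\rrangle$, a proper quotient — contradicting the way the $\homo_n$ were chosen (this is exactly the mechanism that a factor set would have to use). Hence $Ker(T)$ is trivial, so $\limitgroup$ itself acts on $T$ very smally and super stably. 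But then Theorem~\ref{t:structure} (via Corollary~\ref{c:structure}) hands us a non-trivial \gad\ for $\limitgroup$, and in particular a non-trivial abelian splitting, contradicting the standing hypothesis of the proposition. Therefore $\limitgroup$ has a factor set; in fact under these hypotheses the factor set can be taken to consist of the quotients $\limitgroup/\llangle g_i\rrangle$ for a finite set $\{g_i\}$ extracted from a finite subcover of $\T(\limitgroup)$ by sets of the form $U(g)$.

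To make that last sentence precise — and this is the step I expect to be the real content rather than a formality — I would use compactness of $\T(\limitgroup)$ (Corollary~\ref{c:basic}). For each $T\in\T(\limitgroup)$ the argument above shows $Ker(T)\neq 1$ is impossible only under the no-splitting hypothesis; without assuming a contradiction, what is true unconditionally here is that every $T\in\T(\limitgroup)$ has non-trivial kernel \emph{or} yields a \gad, and in the vacuous setting of this proposition the \gad\ alternative cannot occur, so $Ker(T)\neq 1$ for every $T\in\T(\limitgroup)$. Choosing $1\neq g_T\in Ker(T)$ gives an open cover $\{U(g_T)\}_{T\in\T(\limitgroup)}$ of the compact space $\T(\limitgroup)$; extract a finite subcover $U(g_1),\dots,U(g_k)$. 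Then for any $\homo\in X$, $T_\homo$ lies in some $U(g_j)$, so $\homo(g_j)=1$ and $\homo$ factors through $q_j:\limitgroup\to\limitgroup/\llangle g_j\rrangle$; homomorphisms with cyclic image factor through the abelianization (mod torsion), a proper quotient since $\limitgroup$ is non-abelian, which we may append to the list. Each $q_j$ is a proper epimorphism because $g_j\neq 1$ and $\limitgroup$ is residually free (so $\llangle g_j\rrangle\neq 1$ kills at least one homomorphism, ensuring $Hom$ strictly drops), and no modular automorphism is needed since $Mod(\limitgroup)$ is inner. Thus $\{q_j\}$ is a factor set, completing the proof.

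The main obstacle, as flagged, is justifying that $Ker(T)$ is non-trivial for \emph{every} $T\in\T(\limitgroup)$: this is precisely where one must know that Theorem~\ref{t:structure}/Corollary~\ref{c:structure} would otherwise produce a genuine non-trivial abelian \gad, and one must be careful that "very small + super stable" is exactly the input those results require — which is supplied by items~(1)--(3) of Proposition~\ref{p:basic}. Everything else (compactness, clopenness, properness of the quotients) is bookkeeping.
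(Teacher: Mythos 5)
Your proposal is correct and, despite the detour into a contradiction framing at the start, the substance of the argument (especially your third paragraph) matches the paper's proof exactly: for every $T\in\T(\limitgroup)$ the structure theorem would produce a non-trivial abelian splitting unless $Ker(T)\neq 1$; choose $k_T\in Ker(T)$; use clopenness of the sets $U(k_T)$ and compactness of $\T(\limitgroup)$ to extract a finite subcover; append the abelianization to handle homomorphisms with cyclic image. The extra verifications you supply --- that $Mod(\limitgroup)$ is inner in this vacuous setting and that each $q_j$ is a proper epimorphism by residual freeness --- are correct and only make explicit what the paper leaves to the reader.
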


\begin{proof}
Let $T\in\T(\limitgroup)$. By Proposition~\ref{p:basic}, either
$Ker(T)$ is non-trivial or $T$ satisfies the hypotheses of
Theorem~\ref{t:structure}. The latter case doesn't occur or else, by
Corollary~\ref{c:structure}, $\limitgroup/Ker(T)$ admits a non-trivial
abelian splitting. In particular, $Ker(T)$ is non-trivial. Choose
non-trivial $k_T\in Ker(T)$. By Item~\ref{i:clopen} of
Proposition~\ref{p:basic}, $\{U(k_T)\mid T\in\T(\limitgroup)\}$ is an
open cover of $\T(\limitgroup)$. Let
$\{U(k_i)\}$ be a finite subcover.  By
definition, $\{\limitgroup\to
Ab(\limitgroup)\}\cup\{q_i:\limitgroup\to \limitgroup/\llangle
k_i\rrangle\}$ is a factor set.
\end{proof}

The key to the proof of the general case is Sela's notion of a {\it short}
homomorphism, a concept which we now define.

\begin{definition}\label{d:length}
Let $G$ be an \fg\ group. Two elements $\homo$ and $\homo'$ in
$Hom(G,\f)$ are {\it equivalent}, denoted $\homo\sim\homo'$, if there
is $\auto\in Mod(G)$ and an element $c\in\f$ such that
$\homo'=i_c\circ\homo\circ\auto$.\footnote{$i_c$ is conjugation by
$c$} Fix a set $\gen$ of generators for $G$ and by $|\homo|$ denote
$\max_{g\in\gen}|\homo(a)|$ where, for elements of $\f$, $|\cdot|$
indicates word length. We say that $\homo$ is {\it short} if, for all
$\homo'\sim\homo$, $|\homo|\le |\homo'|$.
\end{definition}

Note that if $\homo\in X$ and
$\homo'\sim\homo$, then $\homo'\in X$. Here is
another exercise in Gromov convergence. See
\cite[Claim~5.3]{zs:tarski1} and also \cite{rs:structure1} and
\cite[Theorem~7.4]{mb:handbook}.
\begin{exercise}\label{e:tree shorten}
Suppose that $G$ is generic, $\{\homo_i\}$ is a sequence in $Hom(G,\f)$,
and $\lim T_{\homo_i}=T$ in $\T(G)$. Then, either
\begin{itemize}
\item
$Ker(T)$ is non-trivial, or
\item
eventually $\homo_i$ is not short.
\end{itemize}
\end{exercise}
The idea is that if
the first bullet does not hold, then the $\gad$ of $G$ given by
Corollary~\ref{c:structure} can be used to find elements of $Mod(G)$
that shorten $f_i$ for $i$ large.

Let $Y$ be the subset of $X$ consisting of short homomorphisms and let
$\T'(G)$ be the closure in $\T(G)$ of $\{T_f\mid \homo\in Y
\}.$ By
Corollary~\ref{c:basic}, $\T'(G)$ is compact.

\begin{proof}[Proof of the Main Proposition]
Let $T\in\T'(\limitgroup)$. By Exercise~\ref{e:tree shorten}, $Ker(T)$
is non-trivial. Choose non-trivial $k_T\in Ker(T)$. By
Corollary~\ref{c:basic}, $\{U(k_T)\mid
T\in\T'(\limitgroup)\}$ is an open cover of
$\T'(\limitgroup)$. Let $\{U(k_i)\}$ be a finite
subcover.  By definition, $\{\limitgroup\to Ab(\limitgroup)\}\cup\{q_i:\limitgroup\to \limitgroup/\llangle
k_i\rrangle\}$ is a factor set.
\end{proof}

\def\Q{\mathcal Q}

\begin{remark}
Cornelius Reinfeldt and Richard Weidmann point out that a factor set
for a generic limit group $\limitgroup$ can be found without appealing
to Corollary~\ref{c:basic} as follows. Let $\{\gamma_1,\dots\}$
enumerate the non-trivial elements of $\limitgroup$. Let
$\Q_i:=\{\limitgroup/\llangle
\gamma_1\rrangle,\dots,\limitgroup/\llangle\gamma_i\rrangle \}$. If
$\Q_i$ is not a factor set, then there is $\homo_i\in Y$ that is
injective on $\{\gamma_1,\dots,\gamma_i\}$. By Paulin's convergence
theorem, a subsequence of $\{T_{\homo_i}\}$ converges to a faithful
$\limitgroup$-tree in contradiction to Exercise~\ref{e:tree shorten}.
\end{remark}

\jsj-decompositions will be used to prove Theorem~\ref{t:tfae}, so we
digress.

\section{Review: 
  $\mathsf{JSJ}$-theory} Some familiarity with \JSJ-theory is
assumed. The reader is referred to Rips-Sela \cite{rs:jsj},
Dunwoody-Sageev \cite{ds:jsj}, Fujiwara-Papasoglou \cite{fp:jsj}.  For
any freely indecomposable \fg\ group $G$ consider the class \gad's
with at most one edge such that:
\begin{enumerate}
\item[(\JSJ)] every non-cyclic abelian subgroup $A\subset G$ is
elliptic.
\end{enumerate}

We observe that
\begin{itemize}
\item Any two such \gad's are hyperbolic-hyperbolic\footnote{each
edge group of corresponding trees contains an element not fixing a
point of the other tree} or elliptic-elliptic\footnote{each edge group
of corresponding trees fixes a point of the other tree}(a
hyperbolic-elliptic pair implies that one splitting can be used to
refine the other. Since the hyperbolic edge group is necessarily
cyclic by (\JSJ), this refinement gives a free product decomposition
of $G$).
\item A hyperbolic-hyperbolic pair has both edge groups cyclic and
yields a \GAD\ of $G$ with a \QH-vertex group.
\item An elliptic-elliptic pair has a common refinement that satisfies
(\JSJ) and whose set of elliptics is the intersection of
the sets of elliptics in the given splittings.
\end{itemize}

Given a \GAD\ $\Delta$ of $G$, we say that $g\in G$ is {\it
  $\Delta$-elliptic}\footnote{We thank Richard Weidmann for pointing
  out an error in a previous version of this definition and also for
  suggesting the correction.} if there is a vertex group $V$ of
$\Delta$ such that either:
\begin{itemize}
\item $V$ is \QH\ and $g$ is conjugate to a multiple of a boundary component;
\item $V$ is abelian and $g$ is conjugate into $\cl(V)$; or
\item $V$ is rigid and $g$ is conjugate into the envelope of $V$.
\end{itemize}

The idea is that $\Delta$ gives rise to a family of
splittings\footnote{not necessarily satisfying (\jsj ).} with at most
one edge that come from edges of the decomposition, from simple closed
curves in \QH-vertex groups, and from subgroups $A'$ of an abelian
vertex $A$ that contain $\cl(A)$ (equivalently $P(A)$) and with
$A/A'\cong\mathbb Z$. For example, a non-peripheral element of $A$ is
hyperbolic in some 1-edge splitting obtained by blowing up the vertex
$A$ to an edge and then collapsing the original edges of $\Delta$. An
element is $\Delta$-elliptic iff it is elliptic with respect to all
these splittings with at most one edge. Conversely, any finite
collection of \gad's with at most one edge and that sat\-is\-fy (\JSJ)
gives rise to a \GAD\ whose set of elliptics is precisely the
intersection of the set of elliptics in the collection.

\begin{definition}
An {\it abelian \JSJ-decomposition of $G$} is a \GAD\ whose elliptic set
is the intersection of elliptics in the family of {\it all} \gad's
with at most one edge and that satisfy (\JSJ).
\end{definition}

\begin{example}
The group $G=F\times \Z$ has no 1-edge \gad's satisfying (\JSJ) so the
abelian \JSJ-decomposition $\Delta$ of $G$ is a single point labeled
$G$. Of course, $G$ does have (many) abelian splittings. If $F$ is
non-abelian, then every element of $G$ is $\Delta$-elliptic. If $F$ is
abelian, then only the torsion elements of $G$ are $\Delta$-elliptic.
\end{example}

To show that a group $G$ admits an abelian \jsj-decomposition it is necessary
to show that there is a bound to the complexity of the \gad's arising
from finite collections of 1-edge splittings satisfying
(\jsj). If $G$ were \fp\ the results of \cite{bf:bounding}
would suffice. Since we don't know yet that limit groups are \fp,
another technique is needed. Following Sela, we use acylindrical
accessibility.

\begin{definition}
A simplicial $G$-tree $T$ is {\it $n$-acylindrical} if, for
non-trivial $g\in G$, the diameter in the simplicial metric of the
sets $Fix(g)$ is bounded by $n$. It is {\it acylindrical} if it is
$n$-acylindrical for some $n$.
\end{definition}

\begin{thm}[Acylindrical Accessibility:
    Sela \cite{zs:accessibility}, Weidmann
\cite{rw:accessibility}]\label{t:acylindrical accessibility} Let $G$
be a non-cyclic freely indecomposable \fg\ group and let $T$ be a
minimal $k$-a\-cyl\-in\-dri\-cal simplicial $G$-tree. Then, $T/G$ has at most
$1+2k(rank\ G-1)$ vertices.
\end{thm}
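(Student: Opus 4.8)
The plan is to run a Dunwoody-style accessibility argument for the $k$-acylindrical tree $T$, controlling complexity by a carefully chosen combinatorial quantity attached to $G$-invariant measured laminations (or to graph-of-spaces models) resolving $T$. Since we do not yet know that limit groups are \fp, we cannot invoke \cite{bf:bounding} directly; instead I would fix a finite $2$-complex $K$ with $\pi_1(K)\cong G$ (allowed after replacing $G$ by an \fp\ group mapping onto it, since the statement for the quotient follows once we have it for a group surjecting onto it — or more honestly, one works with a finite generating set of size $\mathrm{rank}(G)$ and builds the complex from a finite presentation of an \fp\ approximation as in Lemma~\ref{l:limit group is orf}). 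First I would reduce to the case that $T$ is simplicial with no valence-one or valence-two vertices carrying trivial stabilizer decorations (i.e.\ $T/G$ is a \emph{reduced} graph of groups); collapsing such vertices only decreases the vertex count and preserves minimality and $k$-acylindricity.

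The heart of the argument is the following counting estimate. Let $V_1,\dots,V_r$ be the vertex orbits and $E_1,\dots,E_s$ the edge orbits of $T/G$. Euler characteristic of the underlying graph gives $s - r = -\chi(T/G) \le \mathrm{rank}(\pi_1(T/G))-1$, but more importantly one wants to bound $r$ directly. I would distinguish vertices of $T/G$ with nontrivial stabilizer from those with trivial stabilizer. For a vertex $v$ with trivial stabilizer, minimality and reducedness force the valence of $v$ to be at least $3$. For vertices with nontrivial stabilizer, $k$-acylindricity enters: if $g\neq 1$ fixes $v$, then $g$ fixes a segment of length $\le k$ around $v$, so along any ray from $v$ the stabilizers become trivial within $k$ edges; this bounds how many consecutive positively-valenced "thin" vertices can be strung together and, combined with the bound on edges emanating from nontrivial-stabilizer vertices, yields that the number of nontrivial-stabilizer orbits contributes the term $2k(\mathrm{rank}\,G - 1)$ while the free part contributes the leading $1$. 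The clean way to package this is to pass to the graph of groups, collapse each maximal subtree on which all vertex stabilizers inject a fixed nontrivial subgroup, observe the collapsed tree is $1$-acylindrical after rescaling, and then run the classical Dunwoody argument on the free part, re-expanding and using the $k$-bound to control each expansion by a factor of at most $2k$.

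The main obstacle — and where the real content lies — is making the acylindricity constant interact correctly with the accessibility bound without assuming finite presentability, i.e.\ proving that the process of collapsing/expanding along segments of bounded length terminates with the stated explicit linear bound $1+2k(\mathrm{rank}\,G-1)$ rather than merely \emph{some} bound. I expect to handle this by following Weidmann's Nielsen-methods approach \cite{rw:accessibility}: a minimal $G$-tree acted on $k$-acylindrically admits a finite $G$-invariant subtree generated (in the Nielsen sense) by at most $\mathrm{rank}\,G$ "vertices", and tracking the Nielsen reduction shows each of these spawns a segment of length $\le 2k$ before stabilizers vanish. The bookkeeping — showing no two of the $\mathrm{rank}\,G$ generating vertices interfere and that the endpoints contribute the additive $1$ — is routine but delicate, and is the step I would write out most carefully; everything else (reduction to reduced simplicial trees, the Euler-characteristic inequality, the valence-$\ge 3$ observation) is standard Bass–Serre theory.
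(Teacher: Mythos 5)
The paper does not actually prove Theorem~\ref{t:acylindrical accessibility}; it is cited as an external input from Sela \cite{zs:accessibility} and Weidmann \cite{rw:accessibility}, with the remark before the statement explaining that this theorem is invoked precisely \emph{because} the fp-based accessibility machinery of \cite{bf:bounding} is unavailable (limit groups are not yet known to be fp at that stage). So there is no internal proof to compare against, and your proposal should be judged on its own.

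Judged that way, there is a genuine gap in the opening reduction. You suggest passing to a finitely presented group $H$ surjecting onto $G$ and arguing that the statement for $G$ ``follows once we have it for a group surjecting onto it.'' This fails on two counts. First, if $N=\ker(H\twoheadrightarrow G)$ is nontrivial, then every $1\neq n\in N$ acts trivially on $T$, so $\mathrm{Fix}_T(n)=T$ and the $H$-action is \emph{not} $k$-acylindrical for any $k$ (indeed not acylindrical at all, since $T$ is unbounded); the hypotheses of the theorem simply do not hold for the pulled-back action. Second, even setting that aside, $T/H=T/G$ but the conclusion would involve $\mathrm{rank}(H)$, which can strictly exceed $\mathrm{rank}(G)$, so you would not recover the stated bound. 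In short, the fp-approximation step is not a reduction at all. The paper is explicit that one must avoid this route, and the whole point of invoking acylindrical accessibility is to sidestep finite presentability. Relatedly, the move ``collapse each maximal subtree on which all vertex stabilizers inject a fixed nontrivial subgroup, observe the collapsed tree is $1$-acylindrical after rescaling'' is asserted without justification and is not obviously true: collapsing a subtree fixed by a chosen nontrivial element says nothing about the fixed sets of other elements, and acylindricity is a uniform condition over all nontrivial $g\in G$.

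Your later pivot to Weidmann's Nielsen-methods approach is the right idea and is in fact how the explicit bound $1+2k(\mathrm{rank}\,G-1)$ is obtained without finite presentability, but as written your proposal only names this strategy and defers the entire combinatorial content (the Nielsen reduction of a generating tuple, how $k$-acylindricity caps the length of segments of nontrivially-stabilized edges between ``essential'' vertices, and the final count) to ``routine but delicate'' bookkeeping. That bookkeeping \emph{is} the proof, and nothing in the proposal actually establishes the factor $2k$ or the additive $1$; they are asserted as expected outcomes. As it stands this is a research plan pointing at the correct reference rather than a proof, and its one concrete reduction step is incorrect.
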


The explicit bound in Theorem~\ref{t:acylindrical accessibility} is
due to Richard Weidmann.
For limit groups, 1-edge splittings satisfying (\jsj) are
2-acylindrical and finitely many such splittings give rise to \gad's that
can be arranged to be 2-acylindrical. Theorem~\ref{t:acylindrical
accessibility} can then be applied to show that abelian
\jsj-decompositions exist.

\begin{thm}[\cite{zs:tarski1}]
Limit groups admit abelian \JSJ-decompositions.
\end{thm}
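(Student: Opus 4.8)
The plan is to realize an abelian \JSJ-decomposition as the \GAD\ associated to a sufficiently complex finite collection of one-edged splittings satisfying (\JSJ), where "sufficiently complex" is controlled by acylindrical accessibility. First I would observe, following the discussion preceding the theorem, that a one-edged \GAD\ of a limit group $\limitgroup$ satisfying (\JSJ) has abelian edge group, and that in fact such a splitting can be taken to be $2$-acylindrical: if the edge group $C$ is cyclic this is a standard fact about cyclic splittings of torsion-free groups (using that abelian subgroups of limit groups are \fg\ and that $\limitgroup$ has the property that a nontrivial element fixing a long segment forces the endpoints' stabilizers to coincide — compare the super stability in Proposition~\ref{p:basic}); if $C$ is non-cyclic abelian then by the (\JSJ) hypothesis $C$ is elliptic, contradicting that it is an edge group of a one-edged splitting unless that splitting is trivial. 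So all relevant one-edged splittings have cyclic edge group and are $2$-acylindrical.

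Next I would show that a finite collection of such one-edged splittings can be assembled into a single \GAD\ whose elliptic set is exactly the intersection of the individual elliptic sets, and — crucially — that this \GAD\ can be arranged to remain $2$-acylindrical (as a simplicial tree, after collapsing the \QH-vertices' non-peripheral structure to the level of the associated splitting). The mechanism is exactly the trichotomy recalled in the excerpt: a hyperbolic-hyperbolic pair produces a \QH-vertex and cyclic edges; an elliptic-elliptic pair has a common refinement satisfying (\JSJ) with the intersected elliptic set; and the hyperbolic-elliptic case cannot occur without yielding a free product decomposition, which is excluded since $\limitgroup$ is freely indecomposable. Iterating this pairwise over a finite family yields the desired \GAD. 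One must check that the refinements and foldings involved preserve the $2$-acylindricity bound; this is where care is needed, but it is a finite bookkeeping argument of the type carried out by Sela.

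Then I would invoke Theorem~\ref{t:acylindrical accessibility}: a minimal $2$-acylindrical simplicial $\limitgroup$-tree has at most $1 + 4(\mathrm{rank}\,\limitgroup - 1)$ vertices in its quotient, and since limit groups are \fg\ (by hypothesis, being quotients of \fg\ groups) this is a uniform bound. Hence among all \GAD's arising from finite collections of one-edged (\JSJ)-splittings there is one, call it $\Delta$, whose underlying graph has the maximal number of edges (equivalently, whose elliptic set is minimal — one uses here that adding a splitting to the collection can only shrink the elliptic set, and that the elliptic set determines enough of the complexity). For this maximal $\Delta$, the elliptic set already equals the intersection over the \emph{entire} family of one-edged (\JSJ)-splittings: any further splitting not already "seen" by $\Delta$ could be added to the defining collection to produce a strictly more complex \GAD, contradicting maximality. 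Therefore $\Delta$ is an abelian \JSJ-decomposition of $\limitgroup$.

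The main obstacle I anticipate is the second step: verifying that the common refinements and the \QH-vertex constructions arising from the hyperbolic-hyperbolic and elliptic-elliptic cases can be carried out while keeping a uniform acylindricity constant (here $2$), so that Theorem~\ref{t:acylindrical accessibility} applies with a bound independent of the collection. The subtlety flagged in the Caution box of the excerpt — that limit actions can have nontrivial-but-not-in-the-kernel elements fixing arcs in linear pieces — is a warning that super stability, not mere stability, is what one must lean on when bounding fixed-point sets; one should phrase the acylindricity estimate in terms of super stability of the limit-tree structure underlying these splittings rather than in terms of naive edge-stabilizer intersections.
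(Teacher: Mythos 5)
Your overall strategy — realize the JSJ as the GAD built from a maximally complex finite family of one-edged (\JSJ)-splittings, with acylindrical accessibility supplying the complexity bound — is exactly the route the paper sketches (the text surrounding the theorem makes this explicit: it notes that Bestvina--Feighn bounding cannot be used because limit groups are not yet known to be \fp, and that Sela's acylindrical accessibility with $k=2$ is the substitute). However, your first step contains a genuine error. You claim that if the edge group $C$ of a one-edged (\JSJ)-splitting is non-cyclic abelian, then the (\JSJ) hypothesis forces $C$ to be elliptic, ``contradicting that it is an edge group.'' This is not a contradiction: an edge group fixes its own edge (and both endpoints) in the Bass--Serre tree, so it \emph{is} elliptic in its own splitting. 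The (\JSJ) condition is satisfied automatically by $C$ and does \emph{not} exclude non-cyclic abelian edge groups. Indeed the paper explicitly allows them — see the exercise just after the theorem, where splittings $A*_C B$ with $A = C\oplus\Z$ and $C$ of arbitrary rank are singled out. So your reduction to the cyclic case is unfounded, and the rest of your acylindricity argument doesn't cover the case you've silently dropped.

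The second issue is the mechanism you cite for $2$-acylindricity. You invoke ``super stability in Proposition~\ref{p:basic}'' and ``a standard fact about cyclic splittings of torsion-free groups.'' Neither is the right lever. Super stability is a property of the limiting $\R$-trees, not of an abstract simplicial splitting of $\limitgroup$; and $2$-acylindricity is \emph{not} a standard fact for cyclic splittings of arbitrary torsion-free groups (e.g.\ $F\times\Z$ splits over $\Z$ with wildly non-acylindrical trees). What actually makes one-edged abelian (\JSJ)-splittings of a limit group $2$-acylindrical is the commutative-transitivity of limit groups, equivalently the fact that every non-trivial abelian subgroup lies in a \emph{unique} maximal abelian subgroup (Exercise~\ref{e:max abelian}): a non-trivial element fixing a path of length $3$ lies simultaneously in several conjugates of the edge group, and uniqueness of maximal abelian subgroups forces these conjugates to coincide, which contradicts minimality or genericity. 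This argument works uniformly for cyclic and non-cyclic abelian edge groups, which is why you never needed the (incorrect) reduction in the first place. The remainder of your proposal — pairwise assembly of the one-edged splittings via the hyperbolic-hyperbolic / elliptic-elliptic trichotomy, preservation of the acylindricity constant under refinement, the bound $1+4(\mathrm{rank}\,\limitgroup-1)$ from Theorem~\ref{t:acylindrical accessibility}, and the maximality argument — is in line with the paper's sketch and with Sela's original argument, and your flagged ``main obstacle'' is correctly identified as the real bookkeeping work.
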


\begin{exercise}[cf.\ Exercises~\ref{e:generators} and
    \ref{e:restriction to M}] If $\limitgroup$ is a generic limit
group, then $Mod(\limitgroup)$ is generated by inner automorphisms
together with generalized Dehn twists associated to 1-edge splittings
of $\limitgroup$ that satisfy (\jsj); see
\cite[Lemma~2.1]{zs:tarski1}. In fact, the only generalized Dehn
twists that are not Dehn twists can be taken to be with respect to a
splitting of the form $A*_C B$ where $A=C\oplus\mathbb Z$.
\end{exercise}

\begin{remark}\label{r:trivial}
Suppose that $\Delta$ is an abelian \jsj-decomposition for a limit
group $G$. If $\blackbox$ is a rigid vertex group of $\Delta$ or the
peripheral subgroup of an abelian vertex of $\Delta$ and if $\auto\in
Mod(G)$, then $\alpha|\blackbox$ is trivial\footnote{Recall our
convention that {\it trivial} means {\it agrees with the restriction
of an inner automorphism}.}. Indeed, $\blackbox$ is $\Delta'$-elliptic
in any 1-edge \gad\ $\Delta'$ of $G$ satisfying (\jsj) and so the
statement is true for a generating set of $Mod(G)$.
\end{remark}

\section{Limit groups are $\mathsf{CLG}$'s}\label{s:tfae}
In this section, we show that limit groups are \clg's and complete the
proof of Theorem~\ref{t:tfae}.

\begin{lemma}\label{l:clg}
Limit groups are \clg's
\end{lemma}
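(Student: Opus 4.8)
The plan is to induct on the complexity of the limit group $\limitgroup$, using the measure $\mu$ together with $Hom(\limitgroup,\f)$ introduced before the Main Proposition, and to feed in the structural results already assembled. The base cases are free groups (level $0$ \clg's by definition), finitely generated free abelian groups and closed surface groups, all of which are shown to be \clg's of level $\le 1$ in the examples following Definition~\ref{d:clg}. So assume $\limitgroup$ is not of this type and that all simpler limit groups are known to be \clg's.

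First I would reduce to the generic case. If $\limitgroup$ is freely decomposable, write $\limitgroup=\limitgroup_1*\limitgroup_2$ nontrivially; each free factor is a limit group (an \fg\ subgroup of an $\omega$-residually free group, using Lemma~\ref{l:limit group is orf} and Exercise~\ref{e:subgroup}) and is simpler than $\limitgroup$, so by induction each is a \clg, and hence so is $\limitgroup$ by the free-product clause of Definition~\ref{d:clg}. Since limit groups are torsion free (Exercise~\ref{e:torsion free}) and we have disposed of the abelian and closed-surface cases, we may assume $\limitgroup$ is generic.

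Now apply the Main Proposition to obtain a factor set $\{q_i:\limitgroup\to\limitgroup_i\}$. By Corollary~\ref{c:main}'s proof (residual freeness forces $Hom(\limitgroup_i,\f)\subsetneq Hom(\limitgroup,\f)$), each $\limitgroup_i$ is simpler than $\limitgroup$, so by induction each $\limitgroup_i$ is a \clg. The remaining task — and this is the heart of the argument — is to produce a single \gad\ $\Delta$ for $\limitgroup$ together with a homomorphism $\rho:\limitgroup\to\limitgroup'$ to a \clg\ of strictly smaller level verifying the four bullets of Definition~\ref{d:clg}. The natural candidate for $\Delta$ is the abelian \jsj-decomposition (whose existence is asserted in Section~\ref{s:jsj} for limit groups), and the natural candidate for $\rho$ is built from the factor set: one shows that for a suitable $i$, the map $q_i$ is injective on each edge group, on the peripheral subgroup of each abelian vertex, on each rigid envelope $\tilde\blackbox$, and is non-abelian on each \qh-vertex group. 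The tool here is Lemma~\ref{l:inject}: for each such subgroup $H$ (edge group, peripheral subgroup, envelope), every $\homo\in Hom(\limitgroup,\f)$ restricted to $H$ factors through some $q_i|H$ after a modular automorphism, but by Remark~\ref{r:trivial} modular automorphisms are trivial on rigid vertices and peripheral subgroups — so no precomposition is needed on those pieces — and hence some $q_i$ is injective on $H$; intersecting finitely many such cofinal conditions (and using that a factor set can be refined) yields one $q_i$ working for all the finitely many relevant subgroups simultaneously. The maximality-of-image condition on one side of each edge splitting, and the non-abelianness of \qh-images, are checked using Exercise~\ref{e:restriction to M} (maximal abelian subgroups behave well under the relevant one-edge splittings) and the pseudo-Anosov condition built into \qh-vertices. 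Finally, to get the level to drop one takes $\limitgroup'$ to be the image $q_i(\limitgroup)=\limitgroup_i$; since $\limitgroup_i$ is a \clg\ by induction and $Hom(\limitgroup_i,\f)\subsetneq Hom(\limitgroup,\f)$, the induction on the complexity pair $(\mu,Hom(-,\f))$ closes.

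The main obstacle I anticipate is verifying the last bullet of Definition~\ref{d:clg} — injectivity of $\rho$ on the \emph{envelope} $\tilde\blackbox$ of each rigid vertex, not merely on $\blackbox$ itself. The envelope involves centralizers of incident edge groups, which need not be contained in $\blackbox$, so Remark~\ref{r:trivial} alone does not immediately give that modular automorphisms are trivial on $\tilde\blackbox$; one must argue that the relevant Dehn twists act on $\tilde\blackbox$ by conjugation by an element centralizing the edge group, so that the obstruction to injectivity is again detectable after a harmless conjugation, and then apply Lemma~\ref{l:inject}. Getting this bookkeeping right — ensuring the finitely many subgroups can all be handled by a \emph{common} $q_i$ and that the \gad\ produced genuinely has the edge- and vertex-group images sitting inside $\limitgroup_i$ as required — is the delicate part; the rest is assembling the machinery already in place.
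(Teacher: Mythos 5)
Your proposal routes through the Main Theorem and Lemma~\ref{l:inject}, whereas the paper's proof of this lemma does not use the factor set at all: it takes a sequence $\{\homo_i\}$ with $\homo_i$ injective on larger and larger balls, replaces each by an equivalent short map $\hat\homo_i$, and sets $\limitgroup' := \limitgroup/\stabker\hat\homo_i$, with properness of the quotient coming from Exercise~\ref{e:tree shorten}. The needed injectivity of $\rho=q$ on $\cl(A)$, envelopes, and edge groups then drops out because modular automorphisms are trivial there (Remark~\ref{r:trivial}), so $\stabker\hat\homo_i$ meets these subgroups in the same set as $\stabker\homo_i=\{1\}$. Your simultaneous-injectivity step via Lemma~\ref{l:inject} is actually fine as sketched (collect one offending element per $q_i$ and run the lemma's proof), and the envelope subtlety you flag is genuine but is equally present (and equally compressed) in the paper. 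One small correction: the Main Proposition's factor set has targets $\limitgroup/\llangle k\rrangle$ which need not be limit groups, so you must invoke the Main Theorem, not the Main Proposition, to get limit-group quotients on which the inductive hypothesis applies.

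The substantive gap is the $\qh$-vertex condition. Lemma~\ref{l:inject} requires, for the subgroup $H$, that every $\homo|H$ factor through some $q_i|H$ with no precomposition; this fails for $H=Q$ or $H=[Q,Q]$ because modular automorphisms include surface homeomorphisms of $Q$, which preserve $Q$ only up to conjugacy, not up to an inner automorphism of $\limitgroup$. So Lemma~\ref{l:inject} does not yield a $q_i$ injective on $[Q,Q]$, and the appeal to ``the pseudo-Anosov condition built into $\qh$-vertices'' is not an argument. The paper's sequence-based proof handles this cleanly: if $q(Q)$ were abelian, the finitely many commutators of generators of $Q$ would eventually die under $\hat\homo_i$; canonicity of $\qh$-vertices makes $\auto_i(Q)$ conjugate to $Q$, so the same commutators would eventually die under $\homo_i$, contradicting $\stabker\homo_i=\{1\}$. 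This has no easy analogue in your framework because you would need a single $\homo$ not killing any element of the infinite, $Mod$-orbit-closed set of commutators in $Q$, which $\omega$-residual freeness cannot furnish. To repair your proof you would either need to import the paper's sequence argument for this one bullet, or find a finite $Mod$-invariant (up to conjugacy) obstruction inside $Q$ — neither of which your write-up supplies.
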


\begin{proof}
Let $\limitgroup$ be a limit group, which we may assume is
generic. Let $\{\homo_i\}$ be a sequence in $Hom(\limitgroup,\f)$ such
that $\homo_i$ is injective on elements of length at most $i$ (with
respect to some finite generating set for $\limitgroup$). Define
$\hat\homo_i$ to be a short map equivalent to $\homo_i$.  According
to Exercise~\ref{e:tree shorten}, $q:\limitgroup\to
\limitgroup':=\limitgroup/\stabker\hat\homo_i$ is a proper
epimorphism, and so by induction we may assume that $\limitgroup'$ is
a \clg.

Let $\Delta$ be an abelian \jsj-decomposition of $\limitgroup$. We
will show that $q$ and $\Delta$ satisfy the conditions in
Definition~\ref{d:clg}. The key observations are these.
\begin{itemize}
\item
Elements of $Mod(\limitgroup)$ when restricted to the peripheral
subgroup $\cl(A)$ of an abelian vertex $A$ of $\Delta$ are trivial
(Remark~\ref{r:trivial}). Since
$\stabker\homo_i$ is trivial, $q|\cl(A)$ is injective.
Similarly, the restriction of $q$ to the envelope of
a rigid vertex group of $\Delta$ is injective.
\item
Elements of $Mod(\limitgroup)$ when restricted to edge groups of
$\Delta$ are trivial.
Since $\limitgroup$ is a limit group, each edge
group is a maximal abelian subgroup in at least one of the two
adjacent vertex groups. See Exercise~\ref{e:max abelian}.
\item
The $q$-image of a \QH-vertex group $Q$ of $\Delta$ is
non-abelian. Indeed, suppose that $Q$ is a \QH-vertex group of
$\Delta$ and that $q(Q)$ is abelian. Then, eventually $\hat\homo_i(Q)$
is abelian. \qh-vertex groups of abelian \jsj-decompositions are
canonical, and so every element of $Mod(\limitgroup)$ preserves $Q$ up
to conjugacy. Hence, eventually $\homo_i(Q)$ is abelian, contradicting the
triviality of $\stabker\homo_i$.
\end{itemize}
\end{proof}

\begin{proof}[Proof of Theorem~\ref{t:tfae}]
(1)$\implies$(2)$\implies$(3) were exercises. (3)$\implies$(1) is the
   content of Lemma~\ref{l:clg}.
\end{proof}

\section{A more geometric approach}\label{s:more geometric}
In this section, we show how to derive the Main Proposition using Rips
theory for \fp\ groups in place of the structure theory of actions of
\fg\ groups on real trees.

\begin{definition}
Let $K$ be a finite 2-complex with a measured lamination
$(\Lambda,\mu)$. The {\it length of $\Lambda$}, denoted $\|\Lambda\|$,
is the sum $\Sigma_e\int_e \mu$ over the edges $e$ of $K$.
\end{definition}

If $\res:\tilde K\to T$ is a resolution, then $\|\res\|_K$ is the
length of the induced lamination $\Lambda_\res$. Suppose that $K$ is a
2-complex for $G$.\footnote{i.e.\ the fundamental group of $K$ is
  identified with the group $G$} Recall that $\Cayley$
is a Cayley graph for $\f$ with respect to a fixed basis and that from
a homomorphism $\homo: G\to\f$ a resolution $\res:(\tilde K,\tilde
K^{(0)})\to (\Cayley,\Cayley^{(0)})$ can be constructed, see
\cite{bf:bounding}. The resolution $\res$ depends on a choice of
images of a set of orbit representatives of vertices in $\tilde K$. If
$\res$ minimizes $\|\cdot\|_K$ over this set of
choices, then we define $\|\homo\|_K:=\|\res\|_K$.

\begin{lemma}\label{l:compare}
Let $K_1$ and $K_2$ be finite 2-complexes for $G$. There is
a number $B=B(K_1,K_2)$ such that, for all $\homo\in Hom(G,\f)$,
$$B^{-1}\cdot \|\homo\|_{K_1}\le\|\homo\|_{K_2}\le B\cdot\|\homo\|_{K_1}.$$
\end{lemma}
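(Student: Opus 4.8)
The plan is to compare each of $\|\cdot\|_{K_1}$ and $\|\cdot\|_{K_2}$ with a common third quantity, exploiting that $K_1$ and $K_2$ both have fundamental group $G$, hence are homotopy equivalent (up to the usual subtleties of 2-complexes, which do not matter here since we only need maps in one direction). First I would fix cellular maps $g_{12}\colon K_1\to K_2$ and $g_{21}\colon K_2\to K_1$ inducing the identity on $G=\pi_1$, together with their lifts $\tilde g_{12}\colon\tilde K_1\to\tilde K_2$ and $\tilde g_{21}\colon\tilde K_2\to\tilde K_1$ to the universal covers; these are $G$-equivariant. The key point is that a cellular map sends each $1$-cell of $\tilde K_1$ to an edge-path in $\tilde K_2$ of uniformly bounded combinatorial length, say at most $L$, where $L=L(K_1,K_2)$ depends only on $g_{12}$.

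Next I would use this to push resolutions back and forth. Given $\homo\in Hom(G,\f)$ and a resolution $\res\colon\tilde K_2\to\Cayley$ realizing $\|\homo\|_{K_2}$, the composition $\res\circ\tilde g_{12}\colon\tilde K_1\to\Cayley$ is again a resolution of $T_\homo$ (it is $G$-equivariant and sends $\tilde K_1^{(0)}$ to $\Cayley^{(0)}$, after a homotopy if necessary). I would estimate the length of its induced lamination: the measure that an edge $e$ of $K_1$ picks up equals, via the Cantor-function/pullback description, the total $\Lambda_\res$-length of the edge-path $\tilde g_{12}(\tilde e)$ in $\tilde K_2$, which is at most $L$ times the maximum of $\int_{e'}\mu$ over edges $e'$ of $K_2$; summing over the finitely many edges of $K_1$ gives $\|\res\circ\tilde g_{12}\|_{K_1}\le L\cdot\#(\text{edges of }K_1)\cdot\|\res\|_{K_2}$. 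Since $\|\homo\|_{K_1}$ is the minimum over all resolutions on $K_1$, this yields $\|\homo\|_{K_1}\le C_{12}\cdot\|\homo\|_{K_2}$ with $C_{12}$ depending only on $K_1,K_2$. The symmetric argument with $g_{21}$ gives $\|\homo\|_{K_2}\le C_{21}\cdot\|\homo\|_{K_1}$, and setting $B:=\max(C_{12},C_{21})$ finishes the proof.

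The main obstacle, and the one point that needs genuine care rather than bookkeeping, is to verify that length behaves correctly under the comparison map: namely that pulling back the lamination $\Lambda_\res$ on $\tilde K_2$ along the cellular map $\tilde g_{12}$ really does produce a measured lamination on $\tilde K_1$ whose edge-measures are controlled as claimed, and that this pulled-back object is (equivalent to) the lamination $\Lambda_{\res\circ\tilde g_{12}}$ associated to the composite resolution. One must be slightly careful because $\tilde g_{12}$ need not be transverse to $\Lambda_\res$ on the nose — an edge of $\tilde K_1$ could map into an edge of $\tilde K_2$ in an orientation-reversing or folded way — but after a small equivariant homotopy (or simply by the defining construction of the resolution from a choice of vertex images, as in \cite{bf:bounding}) one may assume $\res\circ\tilde g_{12}$ is itself obtained by the standard resolution procedure, so that $\|\res\circ\tilde g_{12}\|_{K_1}=\|\Lambda_{\res\circ\tilde g_{12}}\|$ is literally $\sum_e\int_e\mu$, and the bound above applies verbatim. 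Once this is set up, the rest is the routine min/max comparison described in the previous paragraph.
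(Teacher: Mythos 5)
Your proof is correct and takes essentially the same route as the paper: the paper also starts from an optimal resolution on one complex, transports it via a $G$-equivariant map between the universal covers, and bounds the new lamination length by (max combinatorial edge-image length) $\times$ (number of edges) $\times$ (old length). The only cosmetic difference is that the paper sidesteps your transversality worry from the outset by composing the resolution with only the $0$-skeleton map $\psi^{(0)}$ and then invoking the standard construction from vertex images to produce a genuine resolution, using the extension $\psi^{(1)}$ to $1$-skeleta solely for the length estimate—which is precisely the fix you arrive at in your last paragraph.
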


\begin{proof}
Let $\res_1:\tilde K_1\to\Cayley$ be a resolution such that
$\|\res_1\|_{K_1}=\|\homo\|_{K_1}.$ Choose an equivariant map
$\psi^{(0)}:\tilde K_2^{(0)}\to\tilde K_1^{(0)}$ between
0-skeleta. Then, $\res_1\psi^{(0)}$ determines a resolution
$\res_2:\tilde K_2\to\Cayley$. Extend $\psi^{(0)}$ to a cellular map
$\psi^{(1)}:\tilde K_2^{(1)}\to\tilde K_1^{(1)}$ between
1-skeleta. Let $B_2$ be the maximum over the edges $e$ of the
simplicial length of the path $\psi^{(1)}(e)$ and let $E_2$ be the
number of edges in $K_2$. Then,
$$\|\homo\|_{K_2}\le\|\res_2\|_{K_2}\le
B_2N_2\|\res_1\|_{K_1}=B_2N_2\|\homo\|_{K_1}.$$ The other inequality
is similar.
\end{proof}

Recall that in Definition~\ref{d:length}, we defined another length
$|\cdot|$ for elements of $Hom(G,\f)$.

\begin{cor}\label{c:length}
Let $K$ be a finite 2-complex for $G$. Then, there is a number
$B=B(K)$ such that for all $\homo\in Hom(G,\f)$ $$B^{-1}\cdot
|\homo|\le \|\homo\|_K\le B\cdot |\homo|.$$
\end{cor}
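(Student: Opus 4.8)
The plan is to deduce this from Lemma~\ref{l:compare} by comparing the two length functions on a single, carefully chosen 2-complex. First I would fix an auxiliary finite 2-complex $K_0$ for $G$ built directly from the chosen generating set $\gen$: take a wedge of $\mu(G)$ circles (one per generator) with a 2-cell attached for each relation in a finite presentation of $G$ — or, if $G$ is not known to be finitely presented at this point, simply take the presentation 2-complex to be infinite in 2-cells but note that only the 1-skeleton matters for the length computation below, so one may instead work with a suitable finite approximation; in any case $K_0$ can be chosen so that the generators of $\gen$ are exactly the edge-loops of $K_0^{(1)}$. On this $K_0$, a resolution $\res:\tilde K_0\to\Cayley$ is determined (up to the choice of vertex images) by where it sends the generators, and the induced lamination restricted to the edge labeled by a generator $g$ is essentially a geodesic path in $\Cayley$ of combinatorial length $|\homo(g)|$. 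Hence $\|\res\|_{K_0}$ is comparable, up to multiplicative constants depending only on $K_0$, to $\sum_{g\in\gen}|\homo(g)|$, and minimizing over vertex-image choices can only change this by a bounded additive amount in each coordinate, which is absorbed into the multiplicative constant once $|\homo|\ge 1$ (the case $|\homo|=0$, i.e. $\homo$ trivial, is handled separately since then $\|\homo\|_K=0$ as well).

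The second step is to relate $\sum_{g\in\gen}|\homo(g)|$ to $|\homo|=\max_{g\in\gen}|\homo(g)|$: these differ by at most the factor $|\gen|$, so they are comparable with constants depending only on $|\gen|=\mu(G)$. Combining, we get $B_0^{-1}|\homo|\le \|\homo\|_{K_0}\le B_0|\homo|$ for a constant $B_0=B_0(K_0)=B_0(G,\gen)$. Finally, for the given $K$, apply Lemma~\ref{l:compare} to the pair $(K_0,K)$ to obtain $B_1=B_1(K_0,K)$ with $B_1^{-1}\|\homo\|_{K_0}\le\|\homo\|_K\le B_1\|\homo\|_{K_0}$, and then $B:=B_0B_1$ works.

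The main obstacle I anticipate is the first step: carefully verifying that the $\Lambda$-length of the resolution on the edge labeled $g$ really is $|\homo(g)|$ up to bounded error, and that the dependence on the choice of orbit representatives' images is genuinely bounded. This requires recalling precisely how the resolution $\res:(\tilde K,\tilde K^{(0)})\to(\Cayley,\Cayley^{(0)})$ is constructed (following \cite{bf:bounding}): one sends the chosen vertex images somewhere in $\Cayley^{(0)}$ and extends over edges by geodesics, so an edge-loop representing $g$ gets sent to a path that, after tightening, is a geodesic representative of $\homo(g)$ in $\Cayley$, up to the two half-edges near the endpoints whose length is at most $\max$ over the (finitely many) vertex-image choices — a bounded additive error. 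Everything else is routine bookkeeping with the constants, exactly parallel to the proof of Lemma~\ref{l:compare} just given, so I would not grind through it in detail.
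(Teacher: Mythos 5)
Your approach is the same as the paper's: complete the rose $R_\gen$ to a finite $2$-complex $K_0$ for $G$ by adding finitely many $2$-cells, check that $\|\cdot\|_{K_0}$ is comparable to $|\cdot|$, and then apply Lemma~\ref{l:compare} to the pair $(K_0,K)$. (The paper's proof is exactly this, stated in one line with the verification left implicit.) Your side worry about $G$ not being finitely presented is unnecessary: the hypothesis of the corollary already hands you a \emph{finite} $2$-complex $K$ with fundamental group $G$, so $G$ is \fp\ and $R_\gen$ can be completed with finitely many $2$-cells, as the paper does.

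One step in your verification is, however, not correct as stated. You claim that ``minimizing over vertex-image choices can only change this by a bounded additive amount.'' For $K_0$ there is a single orbit of vertices, so the only choice is the image $v\in\Cayley^{(0)}$ of its representative; the edge labeled by $a\in\gen$ then has $\Lambda$-length $d(v,\homo(a)v)=|v^{-1}\homo(a)v|$, the word length of a \emph{conjugate} of $\homo(a)$. If every $\homo(a)$ has the form $w c_a w^{-1}$ with $w$ long and the $c_a$ short, taking $v=w$ makes $\sum_a|v^{-1}\homo(a)v|$ drop far below $\sum_a|\homo(a)|$ --- by an unbounded multiplicative factor, not a bounded additive one. (Also note $v$ ranges over all of $\f$, not over a finite set.) Consequently the lower bound $B^{-1}|\homo|\le\|\homo\|_K$ does not literally hold for every $\homo\in Hom(G,\f)$. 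This is an issue that is already present, silently, in the paper's own one-line proof, and it is harmless where the corollary is actually used: in the corollary that follows (comparing short and $K$-short representatives) both sides have been minimized over the equivalence class $\sim$ of Definition~\ref{d:length}, which absorbs post-conjugation by $i_c$, and at that point $\min_c\max_a|c\homo(a)c^{-1}|$ is genuinely comparable to $\min_v\sum_a|v^{-1}\homo(a)v|=\|\homo\|_{K_0}$. The upper bound $\|\homo\|_K\le B|\homo|$ is unaffected since one may always take $v=1$.
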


\begin{proof}
If $\gen$ is the fixed finite generating set for $G$ and if
$R_\gen$ is the wedge of circles with fundamental group identified
with the free group on $\gen$, then complete $R_\gen$ to a 2-complex for $G$
by adding finitely many 2-cells and apply Lemma~\ref{l:compare}.
\end{proof}

\begin{remark}
Lemma~\ref{l:compare} and its corollary allow us to be somewhat
cavalier with our choices of generating sets and 2-complexes.
\end{remark}

\begin{exercise}
The space of (nonempty) measured laminations on $K$ can be identified
with the closed cone without 0 in $\R_+^E$, where $E$ is the set of
edges of $K$, given by the triangle inequalities for each triangle of
$K$. The projectivized space $\pml(K)$
is compact.
\end{exercise}

\begin{definition}
Two sequences
$\{m_i\}$ and $\{n_i\}$ in $\N$ are {\it comparable} if there is a number $C>0$
such that $C^{-1}\cdot m_i\le n_i\le C\cdot m_i$ for all $i$.
\end{definition}

\begin{exercise}\label{e:resolution}
Suppose $K$ is a finite 2-complex for $G$, $\{\homo_i\}$ is a
sequence in $Hom(G,\F)$, $\res_i:\tilde K\to T_\F$ is an
$\homo_i$-equivariant resolution, $\lim T_{\homo_i}=T$, and
$\lim\Lambda_{\res_i}=\Lambda$. If $\{|\homo_i|\}$ and
$\{\|\res_i\|_K\}$ are comparable, then, there is a resolution $\tilde
K\to T$ that sends lifts of leaves of $\Lambda$ to points of $T$ and
is monotonic (Cantor function) on edges of $\tilde K$.
\end{exercise}

\begin{definition}
An element $\homo$ of $Hom(G,\f)$ is {\it $K$-short} if
$\|\homo\|_K\le\|\homo'\|_K$ for all $\homo'\sim\homo$. 
\end{definition}

\begin{cor}
Let $\{\homo_i\}$ be a sequence in $Hom(G,\f)$. Suppose that
$\homo'_i\sim \homo_i\sim \homo_i''$ where $\homo'_i$ is short and
$\homo''_i$ is $K$-short. Then, the sequences $\{|\homo_i'|\}$ and
$\{\|\homo_i''\|_K\}$ are comparable.\qed
\end{cor}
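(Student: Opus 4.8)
The plan is to compare the two length functions $|\cdot|$ and $\|\cdot\|_K$ on $Hom(G,\f)$ via Corollary~\ref{c:length}, and then use the fact that the shortest representatives in each equivalence class are comparable to any fixed representative. Concretely, fix $i$ and work inside the equivalence class of $\homo_i$. By Corollary~\ref{c:length} there is a constant $B=B(K)$, independent of $i$ and of the homomorphism, with $B^{-1}|\res|\le\|\res\|_K\le B|\res|$ for every $\res\in Hom(G,\f)$. Apply this with $\res=\homo_i''$ and with $\res=\homo_i'$ (both of which are equivalent to $\homo_i$, hence to each other, so both inequalities are available for each).

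First I would bound $\|\homo_i''\|_K$ above in terms of $|\homo_i'|$. Since $\homo_i'$ is equivalent to $\homo_i''$ and $\homo_i''$ is $K$-short, we have $\|\homo_i''\|_K\le\|\homo_i'\|_K\le B|\homo_i'|$, using Corollary~\ref{c:length} for the second inequality. Next, for the reverse bound, since $\homo_i''$ is equivalent to $\homo_i'$ and $\homo_i'$ is short, $|\homo_i'|\le|\homo_i''|\le B\|\homo_i''\|_K$, again by Corollary~\ref{c:length}. Combining, $B^{-1}|\homo_i'|\le\|\homo_i''\|_K\le B|\homo_i'|$ for all $i$, which is exactly the assertion that $\{|\homo_i'|\}$ and $\{\|\homo_i''\|_K\}$ are comparable, with comparison constant $C=B$.

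I do not expect a genuine obstacle here: the only subtlety is making sure that the constant $B$ from Corollary~\ref{c:length} really is uniform over all of $Hom(G,\f)$ (it is, since it depends only on $K$ and the fixed generating set $\gen$), and that equivalence is symmetric and transitive so that ``short'' and ``$K$-short'' representatives of the \emph{same} class $[\homo_i]$ may be compared directly to each other through any common representative. Both points are immediate from the definitions. The one thing to be careful about is that $|\cdot|$ and $\|\cdot\|_K$ are minimized over \emph{different} sets of auxiliary choices (conjugation-modular equivalence versus resolution choices), but since $\homo_i'$ and $\homo_i''$ lie in the same $\sim$-class, Corollary~\ref{c:length} applies to each of them individually and bridges the two notions of length. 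Hence the corollary follows with no further work.
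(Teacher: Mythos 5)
Your argument is correct and is the intended one: the paper marks this corollary with \qed precisely because the two-sided bound $B^{-1}|\homo_i'|\le\|\homo_i''\|_K\le B|\homo_i'|$ follows immediately from Corollary~\ref{c:length} together with the minimality properties of short and $K$-short representatives within the common $\sim$-class, exactly as you write.
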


\begin{definition}
If $\leaf$ is a leaf of a measured lamination $\Lambda$ on a finite
2-complex $K$, then (conjugacy classes of) elements in the image of
$\pi_1(\leaf\subset K)$ are {\it carried by $\leaf$}. Suppose that
$\Lambda_i$ is a component of $\Lambda$. If $\Lambda_i$ is simplicial
(consists of a parallel family of compact leaves $\leaf$), then
elements in the image of $\pi_1(\leaf\subset K)$ are {\it carried by
$\Lambda_i$.} If $\Lambda_i$ is minimal and if $N$ is a standard
neighborhood\footnote{see Theorem~\ref{t:rips}} of $\Lambda_i$, then
elements in the image of $\pi_1(N\subset K)$ are {\it carried by
$\Lambda_i$}.
\end{definition}

\begin{definition}\label{d:short sequence}
Let $K$ be a finite 2-complex for $G$. Let $\{\homo_i\}$ be a sequence
of short elements in $Hom(G,\f)$ and let $\res_i:\tilde K\to\Cayley$
be an $\homo_i$-equivariant resolution. We say that the sequence
$\{\res_i\}$ is {\it short} if $\{\|\res_i\|_K\}$ and $\{|\homo_i|\}$
are comparable.
\end{definition}

\begin{exercise}\label{e:shorten}
Let $G$ be freely indecomposable. In the setting of
Definition~\ref{d:short sequence}, if $\{\res_i\}$ is short,
$\Lambda=\lim\Lambda_{\res_i}$, and $T=\lim T_{\homo_i}$, then
$\Lambda$ has a leaf carrying non-trivial elements of $Ker(T)$.
\end{exercise}

The idea is again that, if not, the induced \gad\ could be used
to shorten. The next exercise, along the lines of
Exercise~\ref{e:clg}, will be needed in the following
lemma.\footnote{It is a consequence of Theorem~\ref{t:tfae}, but since
  we are giving an alternate proof we cannot use this.}
\begin{exercise}\label{e:limit group}
Let $\Delta$ be a 1-edge \gad\ of a group $G$ with a
homomorphism $q$ to a limit group $\limitgroup$. Suppose:
\begin{itemize}
\item
the vertex groups of $\Delta$ are non-abelian,
\item
the edge group of $\Delta$ is maximal abelian in each vertex
group, and
\item
$q$ is injective on vertex groups of $\Delta$.
\end{itemize}
Then, $G$ is a limit group.
\end{exercise}

\begin{lemma}\label{l:mod descends}
Let $\limitgroup$ be a limit group and let $q:G\to\limitgroup$ be an
epimorphism such that $Hom(G,\f)=Hom(\limitgroup,\f)$. If $\auto\in
Mod(G)$ then $\auto$ induces an automorphism $\auto'$ of
$\limitgroup$ and $\auto'$ is in
$Mod(\limitgroup)$.
\end{lemma}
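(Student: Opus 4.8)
The plan is to first check that $\auto'$ is well-defined, then reduce to the case where $\auto$ is a single generalized Dehn twist, and finally to realize $\auto'$ by a \gad\ of $\limitgroup$.

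\emph{Well-definedness and reduction.} Since $\limitgroup$ is residually free (Lemma~\ref{l:limit group is orf}), the hypothesis $Hom(G,\f)=Hom(\limitgroup,\f)$ forces $Ker(q)$ to equal the set of elements of $G$ killed by every homomorphism $G\to\f$: every such homomorphism factors through $q$, so $Ker(q)$ lies in every such kernel; and conversely if some nontrivial $g\in Ker(q)$ had $\res(g)\ne 1$ for some $\res\in Hom(G,\f)$, then $\res$ could not factor through $q$. Hence $Ker(q)$ is characteristic, so any $\auto\in Aut(G)$, in particular any $\auto\in Mod(G)$, preserves it and descends to $\auto'\in Aut(\limitgroup)$. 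The rule $\auto\mapsto\auto'$ is a homomorphism $Mod(G)\to Aut(\limitgroup)$ carrying inner automorphisms to inner automorphisms (which lie in $Mod(\limitgroup)$), so by Exercise~\ref{e:generators} it is enough to prove $\auto'\in Mod(\limitgroup)$ when $\auto$ is a generalized Dehn twist.

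\emph{Pushing a generalized Dehn twist into $\limitgroup$.} By Definition~\ref{d:gdt}, such an $\auto$ is determined by a one-edge splitting $\Sigma$ of $G$, say $G=A*_C B$ or $G=A*_C$ with $C$ abelian (and $A$ abelian in the genuinely generalized case), together with an element $z$ centralizing $C$; $\auto$ fixes $A$ (and $\cl(A)$, $B$ in the generalized case) and twists by $z$ across the edge. Applying $q$ produces subgroups $\bar A=q(A)$, $\bar B=q(B)$, $\bar C=q(C)$ of $\limitgroup$ with $\bar z:=q(z)$ centralizing $\bar C$, and $\auto'$ fixes $\bar A$ and twists by $\bar z$. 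If $\auto'$ is inner we are done; otherwise $\bar z\ne 1$. If moreover $\bar C\ne 1$ then, because $\limitgroup$ is a limit group, $\langle\bar C,\bar z\rangle$ is abelian and contained in a unique maximal abelian subgroup $M$ of $\limitgroup$ (Exercise~\ref{e:max abelian}), which is finitely generated and free (Corollary~\ref{c:abelian fg}). The goal is to exhibit a one-edge \gad\ $\Sigma'$ of $\limitgroup$---with edge group an abelian subgroup of $M$ (or the trivial group when $\bar C=1$), with $\bar A$ carried by one side and $\bar z$ by the other---for which $\auto'$ is exactly the associated (generalized) Dehn twist; then $\auto'\in Mod(\Sigma')\subseteq Mod(\limitgroup)$. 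To produce $\Sigma'$ one examines the image under $q$ of the Bass--Serre structure of $\Sigma$ and repairs it using the structure theory of limit groups: Exercise~\ref{e:restriction to M} (the interaction of maximal abelian subgroups with abelian splittings), the existence of abelian \JSJ-decompositions, and acylindrical accessibility together allow one to replace $\bar C$ by the correct abelian subgroup of $M$, and enlarge a vertex group if necessary, so as to obtain a genuine splitting of $\limitgroup$ realizing $\auto'$.

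\emph{Main obstacle.} The hard part is the construction of $\Sigma'$. Unlike for limit groups, a one-edge splitting of the arbitrary group $G$ need not descend along $q$: typically $\bar A*_{\bar C}\bar B$ is a proper extension of $\limitgroup$---for instance when $\bar C$ is not maximal abelian in a vertex group, so that $\bar A*_{\bar C}\bar B$ is not even residually free---so one cannot simply transport $\Sigma$. What rescues the argument is that $\limitgroup$, being a limit group, has tightly constrained abelian subgroups and abelian splittings, so that after replacing $\bar C$ by the appropriate subgroup of $M$ one recovers an honest splitting of $\limitgroup$. Carrying this out, and checking that $\auto'$ is the (generalized) Dehn twist of that splitting, is where essentially all the content of the lemma lies; the remaining cases ($\auto'$ inner) are immediate.
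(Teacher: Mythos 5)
Your first two paragraphs are sound and match the paper's opening moves: $\limitgroup=RF(G)$ makes $Ker(q)$ characteristic, so $\auto'$ exists, and Exercise~\ref{e:generators} reduces you to a single (generalized) Dehn twist $\auto$ supported on a one-edge splitting $\Delta=A*_C B$ or $A*_C$ of $G$.

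The difficulty is that you then stop exactly where the proof begins. You correctly identify the obstacle---$q(A)*_{q(C)}q(B)$ need not equal $\limitgroup$, e.g.\ because $q(C)$ may fail to be maximal abelian in a vertex group---but your resolution of it is a gesture at \jsj\ theory and acylindrical accessibility that is never instantiated. You write that one can ``replace $\bar C$ by the correct abelian subgroup of $M$, and enlarge a vertex group if necessary,'' but you do not say what the correct subgroup is, what the enlargement is, or why the result is a genuine splitting of $\limitgroup$ on which $\auto'$ is the associated twist. You even concede that ``carrying this out \dots\ is where essentially all the content of the lemma lies.'' That concession is accurate, which means the proposal is not a proof.

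The paper's argument supplies exactly the missing mechanism, and it is worth contrasting. Rather than trying to build a splitting of $\limitgroup$ from scratch, it modifies $G$ together with its splitting through a finite sequence of moves that keep $RF(\cdot)=\limitgroup$, steering toward the hypotheses of Exercise~\ref{e:limit group}: first replace every edge and vertex group of $\Delta$ by its $q$-image (this forces injectivity of $q$ on vertex groups); if the edge group is not maximal abelian in both vertex groups, pull the centralizers $Z_A(c)$ and $Z_B(c)$ across the edge; iterate. The limiting \gad\ satisfies all three bullets of Exercise~\ref{e:limit group}, so the modified $G$ is a limit group. Since it is residually free with the same $Hom(\cdot,\f)$ as $\limitgroup$, it equals $\limitgroup$, and the Dehn twist $\auto$ has become a Dehn twist on a splitting of $\limitgroup$, i.e.\ $\auto'\in Mod(\limitgroup)$. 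This iterate-and-recognize scheme is the content you left out.

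One further caution: you invoke Corollary~\ref{c:abelian fg} (abelian subgroups of limit groups are finitely generated and free). That corollary is proved \emph{from} the Main Theorem, while Lemma~\ref{l:mod descends} is part of the alternate proof of the Main Proposition, which in turn yields the Main Theorem. Relying on Corollary~\ref{c:abelian fg} here would be circular. In your write-up it is only decorative, but in a completed proof you must make sure any appeal to the structure of $M$ is independent of the Main Theorem---note the paper's proof avoids this entirely.
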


\begin{proof}
Since $\limitgroup=RF(G)$, automorphisms of $G$ induce automorphisms
of $\limitgroup$. Let $\Delta$ be a 1-edge splitting of $G$ such that
$\auto\in Mod(\Delta)$. It is enough to check the lemma for
$\auto$. We will check the case that $\Delta=A *_{C} B$ and that
$\auto$ is a Dehn twist by an element $c\in C$ and leave the other
(similar) cases as exercises. We may assume that $q(A)$ and $q(B)$ are
non-abelian for otherwise $\auto'$ is trivial. Our goal is to
successively modify $q$ until it satisfies the conditions of
Exercise~\ref{e:limit group}.

First replace all edge and vertex groups by their $q$-images so that
the third condition of the exercise holds. Always rename the result
$G$. If the second condition does not hold, pull\footnote{If $A_0$ is a
subgroup of $A$, then the result of {\it pulling} $A_0$ across the
edge is $A*_{\langle A_0,C\rangle}\langle A_0,B\rangle$, cf.\ moves of
type IIA in \cite{bf:bounding}.} the centralizers $Z_{A}(c)$ and
$Z_{B}(c)$ across the edge. Iterate. It is not hard to show that the
limiting \gad\ satisfies the conditions of the exercise. So, the
modified $G$ is a limit group. Since $Hom(G,\f)=Hom(\limitgroup,\f)$,
we have that $G=\limitgroup$ and $\auto=\auto'$.
\end{proof}

\begin{proof}[Alternate proof of the Main Proposition]
Suppose that $\limitgroup$ is a generic limit group,
$T\in\T'(\limitgroup)$, and $\{\homo_i\}$ is a sequence of short
elements of $Hom(\limitgroup,\f)$ such that $\lim T_{\homo_i}=T$. As
before, our goal is to show that $Ker(T)$ is non-trivial, so suppose
it is trivial. Recall that the action of $\limitgroup$ on $T$ satisfies all the
conclusions of Proposition~\ref{p:basic}.

Let $q:G\to\limitgroup$ be an epimorphism such that $G$ is \fp\ and
$Hom(G,\f)=Hom(\limitgroup,\f)$. By Lemma~\ref{l:mod descends},
elements of the sequence $\{\homo_iq\}$ are short. We may assume
that all intermediate quotients $G\to G'\to\limitgroup$ are freely
indecomposable\footnote{see \cite{ps:coherence}}.

Choose a 2-complex $K$ for $G$ and a subsequence so that
$\Lambda=\lim\Lambda_{\res_i}$ exists where $\res_i:\tilde K\to T_\f$ is an
$\homo_iq$-equivariant resolution and $\{\res_i\}$ is short. For each component
$\Lambda_0$ of $\Lambda$, perform one of the following moves to obtain
a new finite laminated 2-complex for an \fp\ quotient of $G$ (that we
will immediately rename $(K,\Lambda)$ and $G$). Let $G_0$ denote the subgroup of
$G$ carried by $\Lambda_0$.
\begin{enumerate}
\item
If $\Lambda_0$ is minimal and if $G_0$ stabilizes a linear subtree of
$T$, then enlarge $N(\Lambda_0)$ to a model for the action of $q(G_0)$
on $T$.
\item
If $\Lambda_0$ is minimal and if $G_0$ does not
stabilize a linear subtree of $T$, then collapse all added annuli to
their bases.
\item
If $\Lambda_0$ is simplicial and $G_0$ stabilizes an arc of $T$, then attach
2-cells to leaves to replace $G_0$ by $q(G_0)$.
\end{enumerate}
In each case, also modify the resolutions to obtain a short sequence
on the new complex with induced laminations converging to $\Lambda$.
The modified complex and resolutions contradict
Exercise~\ref{e:shorten}. Hence, $Ker(T)$ is non-trivial.

To finish, choose non-trivial $k_T\in Ker(T)$. As before, if
$\{U(k_{T_i})\}$ is a finite cover for
$\T'(\limitgroup)$, then
$\{\limitgroup\to Ab(\limitgroup)\}\cup\{\limitgroup\to
\limitgroup/\llangle k_{T_i}\rrangle\}$ is a factor set.
\end{proof}

\bibliographystyle{plain} \bibliography{ref}
\end{document}